\DeclareMathOperator{\csch}{csch}
\title[Spine of Fleming-Viot process]{The spine of two-particle Fleming-Viot process\\ in a bounded interval}
\author{Krzysztof Burdzy}
\address{Department of Mathematics,
University of Washington,
Seattle, WA 98195, USA.}
\email{burdzy@uw.edu}
\thanks{K.B.'s research was supported in part by Simons Foundation Grants 506732 and 928958.}
\author{J\'anos Engl\"ander}
\address{Department of Mathematics, University of Colorado Boulder, Boulder, CO 80309, USA.}
\email{janos.englander@colorado.edu}
\thanks{J.E.'s research was supported in part by Simons Foundation Grant  579110.}
\author{Donald E. Marshall}
\address{Department of Mathematics, 
University of Washington,
Seattle, WA 98195, USA.}
\email{dmarshal@uw.edu}
\begin{document}


\numberwithin{equation}{section}
\theoremstyle{plain}

\newtheorem{theorem}{Theorem}[section]
\newtheorem{corollary}[theorem]{Corollary}
\newtheorem{lemma}[theorem]{Lemma}
\newtheorem{proposition}[theorem]{Proposition}
\newtheorem{claim}[theorem]{Claim}

\theoremstyle{remark}

\newtheorem{conjecture}[theorem]{Conjecture}
\newtheorem{definition}[theorem]{Definition}
\newtheorem{problem}[theorem]{Problem}
\newtheorem{remark}[theorem]{Remark}
\newtheorem{assumption}[theorem]{Assumption}
\newtheorem{step}{Step}
\setcounter{step}{0}


\newcommand{\eps}{\varepsilon}
\newcommand{\vphi}{\varphi}

\newcommand{\calL}{\mathcal{L}}
\newcommand{\calF}{\mathcal{F}}
\newcommand{\calO}{\mathcal{O}}
\newcommand{\calG}{\mathcal{G}}
\newcommand{\calA}{\mathcal{A}}
\newcommand{\calD}{\mathcal{D}}
\newcommand{\calP}{\mathcal{P}}
\newcommand{\calT}{\mathcal{T}}
\newcommand{\calM}{\mathcal{M}}
\newcommand{\calR}{\mathcal{R}}
\newcommand{\calJ}{\mathcal{J}}
\newcommand{\calN}{\mathcal{N}}
\newcommand{\calS}{\mathcal{S}}
\newcommand{\calW}{\mathcal{W}}
\newcommand{\calK}{\mathcal{K}}
\newcommand{\calI}{\mathcal{I}}
\newcommand{\calC}{\mathcal{C}}
\newcommand{\calB}{\mathcal{B}}
\newcommand{\calV}{\mathcal{V}}
\newcommand{\calQ}{\mathcal{Q}}
\newcommand{\calH}{\mathcal{H}}

\newcommand{\I}{\mathds{1}}
\newcommand{\E}{\operatorname{\mathbb{E}}} %
\newcommand{\Eb}{\operatorname{\mathbf{E}}}%
\renewcommand{\P}{\operatorname{\mathbb{P}}} %
\newcommand{\Pb}{\operatorname{\mathbf{P}}}%
\newcommand{\R}{\mathbb{R}}
\newcommand{\mQ}{\mathbb{Q}}
\newcommand{\C}{\mathds{C}}
\newcommand{\real}{\mathds{R}}
\newcommand{\N}{{\mathbb{N}}}
\newcommand{\Z}{{\mathbb Z}}
\newcommand{\Rn}{{\R^n}}
\newcommand{\Rt}{{\R^2}}
\newcommand{\prt}{\partial}

\newcommand{\ol}{\overline}
\newcommand{\wh}{\widehat}
\newcommand{\wt}{\widetilde}

\newcommand{\cV}{\mathcal{V}}
\newcommand{\cY}{\mathcal{Y}}

\newcommand\myeq{\stackrel{\mathclap{\normalfont\mbox{\tiny{sym}}}}{=}}

\newcommand{\comj}[1]{\textcolor{purple}{\texttt{Janos:} #1}}
\newcommand{\comk}[1]{\textcolor{blue}{\texttt{Chris:} #1}}
\newcommand{\comd}[1]{\textcolor{orange}{\texttt{Don:} #1}}

\def\Re{{\rm Re\,}}
\def\Im{{\rm Im\,}}

\newcommand{\bone}{\mathbf{1}}
\def\n{{\bf n}}
\def\bt{{\bf t}}
\def\bv{{\bf v}}
\def\bz{{\bf z}}
\def\bx{{\bf x}}
\def\ball{{\calB}}

\newcommand{\set}[1]{\left\{#1\right\}}

\newcommand{\e}{\mathrm e}
\newcommand{\F}{\mathcal F}
\newcommand{\FF}{\mathbb F}
\newcommand{\QQ}{\mathbb Q}
\newcommand{\G}{\mathcal G}
\renewcommand{\H}{\mathbf H}
\newcommand{\cH}{\mathcal H}
\newcommand{\cT}{\mathcal T}
\newcommand{\X}{\mathbf X}
\newcommand{\cX}{\mathcal X}
\newcommand{\x}{\mathbf x}
\newcommand{\bfa}{\mathbf a}
\newcommand{\Y}{\mathbf Y}
\newcommand{\V}{\mathbf V}
\newcommand{\bQ}{\mathbf Q}
\newcommand{\y}{\mathbf y}
\newcommand{\z}{\mathbf z}
\newcommand{\bfj}{\mathbf j}

\newcommand{\A}{\mathbf A}
\newcommand{\Aa}{\mathbb A}
\newcommand{\B}{\mathbf B}
\newcommand{\D}{\mathbb D}
\newcommand{\dd}[1]{\dot{#1}}
\newcommand{\cB}{\mathcal B}
\newcommand{\cA}{\mathcal A}
\newcommand{\cL}{\mathcal L}
\newcommand{\cP}{\mathcal P}
\newcommand{\cO}{\mathcal O}
\newcommand{\cM}{\mathcal M}
\newcommand{\cZ}{\mathcal Z}

\newcommand{\bl}{\mathbf 1}
\newcommand{\bfI}{\mathbf{I}}

\allowdisplaybreaks

\keywords{spine;  Fleming-Viot process; Brownian motion; h-transform; Laplace equation; harmonic function; conformal map; Schwarz-Christoffel formula} 

\subjclass{30C20, 30E25 ,60J80, 60G17}

\begin{abstract}
We show that the spine of the Fleming-Viot process driven by Brownian motion and starting with two particles in a bounded interval has a different law from that of  Brownian motion conditioned to stay in the interval forever. Furthermore, we estimate the ``extra drift.'' 
\end{abstract}
\maketitle

\section{Introduction}\label{section:introduction}

Our objective is to show that the spine of the Fleming-Viot process driven by Brownian motion and starting with two particles in the interval $(0,\pi)$ has a different law from that of a Brownian motion conditioned to stay in the interval forever.

A Fleming-Viot process is a process with a branching structure (but not a branching process according to the terminology adopted in the literature on branching processes). Under very mild assumptions, it has a unique spine, i.e. a trajectory within the branching structure that does not end before the lifetime of the process. When the number of individuals in the population is very large, the distribution of the spine is expected to be very close to the distribution of the driving process conditioned on survival forever; this has been proved in some cases (see the next section for references). There are already examples showing that the distribution of the spine may be different from the distribution of the driving process conditioned on survival forever. We believe that the example analyzed in this paper is more ``natural'' than the previously published ones.

\subsection{Literature review}
The following literature review is partly borrowed from \cite{KBTT}.
Fleming-Viot-type processes were originally defined in \cite{BHM}. 
In this model, there is a population of fixed size. Every individual moves independently from all other individuals according to the same Markovian transition mechanism, in a domain with a boundary. When an individual hits the boundary, the individual is killed and an individual chosen randomly (uniformly)  from the survivors  
splits into two individuals and the process continues in this manner. The question of whether the process can be continued for all times was addressed in \cite{BHM,extinctionOfFlemingViot,BBF,GK}. 
All of these papers studied, among other processes,
Fleming-Viot processes driven by Brownian motion.

Every Fleming-Viot process has a unique spine, i.e., a trajectory inside the branching tree that never hits the boundary of the domain where the process is confined; this was proved under strong assumptions in \cite[Thm. 4]{GK} and later in the full generality in \cite{BB18}. 

It was proved in \cite{BB18} that if the state space is finite and the number of individuals in the population goes to infinity then the distributions of spine processes converge to the distribution of the driving  Markov process conditioned on  survival forever. The same result has also been proven for if the driving process is a diffusion reflected normally off the boundary
of a compact domain with soft killing (see \cite{ToughArxiv}), or
 Brownian motion on a Lipshitz domain with hard killing (see \cite{BE.preprint}).

In \cite{BB18},
an example was given of a Fleming-Viot process driven by a Markov process on a three-element state space such that one of the elements plays the role of the boundary, the population consists of two individuals, and the distribution of the spine is not equal to the distribution of the driving Markov process conditioned on survival forever. A Markov process with a three-element state space seems to be a rather artificial example in the context of Fleming-Viot models. More recently, it was shown in \cite{KBTT} that the spine of the Fleming-Viot process with two individuals driven by Brownian motions on $[0,\infty)$ has a spine with a distribution different from the distribution of Brownian motion conditioned to stay positive, i.e., the distribution of the 3-dimensional Bessel process.
The fact that Brownian motion conditioned to stay positive forever is transient makes that example somewhat special. 
We hope that the example analyzed in this paper---of a two-particle Fleming-Viot process driven by Brownian motion in $(0,\pi)$---can be considered ``completely natural.'' 

There is a very exciting new development regarding two-particle Fleming-Viot processes, a recent paper \cite{MK}.
This new paper provides a technique that we use to prove one of our main theorems. It also strengthens motivation for studying this seemingly very specialized stochastic model.
The two main results in \cite{MK} are the following. First, for a two-particle Fleming-Viot process driven by Brownian motion in a bounded Euclidean domain, the uniform probability measure is stationary for the location of the particles at  branching times. Second, the Green function in the domain, considered as a function of two variables, is the stationary density for the Fleming-Viot process. Actually, the results in \cite{MK} are much more general---they are concerned with two-particle Fleming-Viot processes driven by any symmetric Markov processes.

\subsection{Heuristics}
Our objective is to show that the spine of the Fleming-Viot process driven by Brownian motion on $(0,\pi)$ has a different law from that of Brownian motion conditioned to stay in the interval forever.

The basic idea is to represent the two  particles moving in $(0,\pi)$ before one of them exits the interval as  a single two-dimensional Brownian particle until 
it hits the boundary of $(0,\pi)^2$. Without loss of generality, we will assume that the vertical component exits $(0,\pi)$ first. Then, until the exit time, the horizontal component represents the spine. We will use the following two facts about conditioned Brownian motion. First, one-dimensional Brownian motion conditioned to never exit $(0,\pi)$  is a space-time Doob's $H$-process (for an appropriate parabolic function $H$). Second, two-dimensional
Brownian motion conditioned to exit  $(0,\pi)^2$ via the upper or lower side is a Doob's $h$-process for an appropriate harmonic function.

\subsubsection{Remarks on drift versus particle numbers}
Consider $n$-particle Fleming-Viot process driven by Brownian motion in $(0,\pi)$.
One might be interested in how the drift of the spine is compared to that of  Brownian motion conditioned to stay in the interval forever, for a general particle number $n\ge 2$. Let $\tau^{\mathsf{settle}}_n$ denote the smallest time $s$ such that given the Fleming-Viot process's realization on $[0,s]$, the spine trajectory on $[0,t]$ can be deduced without ambiguity for all sufficiently small $t's$. Note that  the spine is launched from the unique particle with an  infinite genealogical tree. Thus,  $\tau^{\mathsf{settle}}_n$ is the first time when all but one of these trees become extinct.

In order to develop an intuition about the effect of the particle number on the drift, first consider $n=2$, as in the setup of this paper.
In this case, $\tau^{\mathsf{settle}}_2$ agrees with the first hitting time of the boundary by any of the two particles. Indeed,  if we start with  ``blue'' and  ``red'' particles, then at the  moment the first hit happens by, say, the blue particle, we know that the path of the red  particle on any smaller time interval is surely that of the spine.
In terms of conditioning, if we want to describe the law of the spine's trajectory on a time interval $[0,dt]$ then the conditioning  is not to survive forever, but rather until the other particle hits the boundary.   This weaker conditioning results in an inward drift that is weaker than that for  Brownian motion conditioned to stay in the interval forever.

Next, let $n>2$. In this situation, in order to conclude that  the spine trajectory on $[0,dt]$ is that of the red particle, it is no longer enough for the red particle to survive until another particle hits the boundary; one has to wait until $\tau^{\mathsf{settle}}_n$ to make such an inference. It is known (see \cite{BB18,GK}) 
 that $\tau^{\mathsf{settle}}_n<\infty$  almost surely for each fixed $n\ge 2$, but $\lim_{n\to\infty}\tau^{\mathsf{settle}}_n=\infty$ in law.
 This means that for each given $n\ge 2$ one has weaker conditioning than perpetual survival (namely, survival until $\tau^{\mathsf{settle}}_n$, or equivalently, until the extinction of the genealogical trees of all other particles), however  in the $n\to\infty$ limit the discrepancy fades away, and the condition indeed becomes perpetual survival, in accordance with the main result in \cite{BE.preprint}. Accordingly, the same can be said about the corresponding (inward) drifts of the spines at time zero.

\subsection{On software assisted proofs}
As a matter of principle, we tried to provide proofs that are human-verifiable. However, we used {\sf Mathematica} to obtain certain explicit formulas, exact quantities and approximate quantities.  We indicated essential uses of {\sf Mathematica} in the text.

 \subsection{Organization of the paper}
The rest  of the  paper  starts with rigorous statements of the model and the three main theorems in Section \ref{model}.
The theorems are proved in the remaining sections. Section \ref{midpoint} is based on Fourier analysis. Next comes Section \ref{conformal} presenting an argument based on complex analysis. Finally, Section \ref{sec:stat} contains arguments using results from \cite{MK}, so ultimately it is based on potential theoretic techniques.

\section{Model and main results}\label{model}

We will now define a Fleming-Viot process and  other elements of the model.
Informally, the process consists of two independent Brownian particles starting at the same point in $(0,\pi)$. At the time when one of them hits 0 or $\pi$, it is killed and the other one branches into two particles. The new particles start moving as independent Brownian motions and the scheme is repeated.

\subsection{Notation and definitions}
Let
$(W_1(t):t\geq 0)$ and $(W_2(t):t\geq 0)$ be two independent Brownian motions with $W_1(0) =a$, $W_2(0)=b$, $a,b\in(0,\pi)$. Let
\begin{align*}
T_0&=0,\\
\tau^1_j &= \inf\{t\geq 0: W_j(t) =0\text{  or  }\pi\}, \qquad j=1,2,\\
T_1&=\min(\tau^1_1,\tau^1_2),\\
m_1&=j\in \{1 , 2\} \text{  such that  }  \tau^1_j \ne T_1,\\
Y_1&=W_{m_1}(T_1),
\end{align*}
and for $k\geq 2$,
\begin{align*}
\tau^k_j&=\inf\{t>T_{k-1} : W_j(t)-W_j(T_{k-1})+Y_{k-1}=0\text{  or  }\pi\}, \qquad j=1,2,\\
T_k&=\min(\tau^k_1,\tau^k_2),\\
m_k&=j \in \{1 , 2\} \text{  such that  }  \tau^k_j \ne T_k,\\
Y_k&=W_{m_k}(T_k)-W_{m_k}(T_{k-1})+Y_{k-1}.
\end{align*}
Note that for a fixed $j$, $\tau^k_j$ need not be different for different $k$.

It follows from \cite[Thm.~5.4]{BBF} or \cite[Thm.~1]{GK} that, a.s.,
$ T_k\to \infty$.
Hence, for any $t\geq 0$ we can find $j\geq 1$ such that $t\in [T_{j-1},T_{j})$. Then we set
\begin{align}\label{j18.1}
\cV(t)&=(V_1(t),V_2(t))
      =(W_1(t)-W_1(T_{j-1})+Y_{j-1},W_2(t)-W_2(T_{j-1})+Y_{j-1}).
\end{align}
This completes the definition of $\{\cV(t), t\geq 0\}$, an example of a Fleming-Viot process. 
We will write $\Pb_{\mu}$ and $\Eb_{\mu}$ to
denote the probability and expectation associated with $\mathcal{V}$, where
 $\mu=\delta_a+\delta_b$.

Let $J_t = J(t)$ denote the spine, i.e., $J_t = V_1(t)$ for
$t\in [T_{k-1},T_{k})$ 
if $V_1(T_k-) \in(0,\pi)$.
If the last condition fails, we let  $J_t = V_2(t)$ for
$t\in [T_{k-1},T_{k})$.

Let $\lambda>0$ and $\phi$ denote the principal Dirichlet eigenvalue and eigenfunction on $(0,\pi)$ for the operator $-\frac12 \Delta$, i.e., $\phi(x) =\sin x$ and $\lambda = 1/2$. 
The function $H(x,t) := e^{\lambda t}\phi(x)$ is parabolic. If $W_t$ is one-dimensional  Brownian motion and we condition the space-time process $(W_t,t)$ using Doob-conditioning with $H$ then we obtain a process $(X_t,t)$ whose first component is ``Brownian motion conditioned to stay $(0,\pi)$ forever.'' The diffusion coefficient of $X$ is the same as for Brownian motion and the drift at $x\in (0,\pi)$ is $\phi'(x)/\phi(x)=\cot(x)$. 

Let $h$ be the harmonic function in $(0,\pi)^2$ with 0 boundary values on the left and right sides and boundary values 1 on the other two sides  (see \eqref{eq: BVP_h}). Let $\cV^h(t) =(\cV^h_1(t), \cV^h_2(t))$ denote the process $\{\cV(t), 0< t < T_1\}$ conditioned by $m_1=1$, i.e., $\cV_2$ exits from $(0,\pi)$ before $\cV_1$ does. The process $\cV^h$ is Doob's $h$-process. We will write $h_x(x,y) = \frac {\prt} {\prt x}h(x,y)$. 
The diffusion coefficient of $\cV^h_1$ is that of Brownian motion and its drift at $(x,y)\in(0,\pi)^2$ is $h_x(x,y)/h(x,y)$. Note that the drift depends on $\cV^h_2$.

Let $\P^{BM}_{(a,b)}$ be the distribution of $\{(W_1(t),W_2(t)), 0\leq t < T_1\}$ starting from $(a,b)$ and let $\P^h_{(a,b)}$ be the distribution of $\{(W_1(t),W_2(t)), 0\leq t < T_1\}$ conditioned by $\{\tau^1_2<\tau^1_1\}$. The latter distribution is an example of Doob's $h$-transform.
Note that 
$h(x,y)=\P^{BM}_{(x,y)}\left(\tau^1_2<\tau^1_1\right).$
 
\subsection{Differentiating conditional semigroups}
\begin{lemma}\label{le: cheating.okay}
If $a,b\in(0,\pi)$, $\mu=\delta_a+\delta_b$ and $f\in C_c^2((0,\pi))$ then
\begin{align*}
\lim\limits_{t\to 0} \frac{1}{t}\left(\Eb_{\mu}[f(J_t)]-\Eb_{\mu}[f(J_t)\mid T_1>t]\right)=0.
\end{align*}
\end{lemma}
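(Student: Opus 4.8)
The plan is to treat this as a soft statement: for small $t$ the event $\{T_1>t\}$ (no branching before time $t$) has probability tending to $1$, and in fact $\Pb_{\mu}(T_1\le t)$ is super-exponentially small, so conditioning on $\{T_1>t\}$ perturbs $\Eb_{\mu}[f(J_t)]$ only by an amount that remains negligible after division by $t$.

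First I would rewrite the difference algebraically. Set $p_t=\Pb_{\mu}(T_1>t)$, which is strictly positive for every $t>0$, and split according to whether $T_1>t$ or $T_1\le t$:
\begin{align*}
\Eb_{\mu}[f(J_t)]-\Eb_{\mu}[f(J_t)\mid T_1>t]
&=\Eb_{\mu}[f(J_t);T_1\le t]+\Bigl(1-\frac{1}{p_t}\Bigr)\Eb_{\mu}[f(J_t);T_1>t]\\
&=\Eb_{\mu}[f(J_t);T_1\le t]-\frac{\Pb_{\mu}(T_1\le t)}{p_t}\,\Eb_{\mu}[f(J_t);T_1>t].
\end{align*}
Since $f\in C_c^2((0,\pi))$ is bounded and $J_t$ is, by the definition of the model, a well-defined measurable random variable on the whole sample space, $|f(J_t)|\le\|f\|_{\infty}$ everywhere; in particular no description of the spine on the event $\{T_1\le t\}$ is needed. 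Bounding $|\Eb_{\mu}[f(J_t);T_1\le t]|\le\|f\|_{\infty}\Pb_{\mu}(T_1\le t)$ and $|\Eb_{\mu}[f(J_t);T_1>t]|\le\|f\|_{\infty}p_t$ in the last display gives, for every $t>0$,
\[
\bigl|\Eb_{\mu}[f(J_t)]-\Eb_{\mu}[f(J_t)\mid T_1>t]\bigr|\le 2\,\|f\|_{\infty}\,\Pb_{\mu}(T_1\le t).
\]

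It then remains to check that $\Pb_{\mu}(T_1\le t)=o(t)$ as $t\to0$; in fact it decays faster than any power of $t$. By definition $T_1=\min(\tau^1_1,\tau^1_2)$, where $\tau^1_j=\inf\{t\ge0:W_j(t)=0\text{ or }\pi\}$ with $W_1(0)=a$ and $W_2(0)=b$. Put $d=\min(a,\pi-a,b,\pi-b)>0$. The event $\{\tau^1_j\le t\}$ forces $\sup_{0\le s\le t}|W_j(s)-W_j(0)|\ge d$, so by the reflection principle and the elementary Gaussian bound $\Pb(W(t)\ge d)\le e^{-d^2/(2t)}$,
\[
\Pb_{\mu}(T_1\le t)\le\Pb(\tau^1_1\le t)+\Pb(\tau^1_2\le t)\le 2\,\Pb\Bigl(\sup_{0\le s\le t}|W(s)|\ge d\Bigr)\le 8\,\Pb\bigl(W(t)\ge d\bigr)\le 8\,e^{-d^2/(2t)},
\]
where $W$ is a one-dimensional Brownian motion started at the origin. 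Dividing the bound from the first step by $t$ and letting $t\to0$ now finishes the proof.

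The main obstacle, such as it is, is essentially non-existent: the argument uses only that $f$ is bounded and the elementary fact that a Brownian particle started in the interior of $(0,\pi)$ needs a macroscopic amount of time to reach an endpoint. The only points requiring a sentence of care are the bookkeeping in the first step — that $J_t$ is defined (measurably) on all of the sample space, so that the $\{T_1\le t\}$ contribution can be discarded crudely — and the triviality $p_t>0$; no analysis of the branching structure after time $T_1$ ever enters the argument.
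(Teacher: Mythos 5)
Your proof is correct and follows essentially the same route as the paper: the same algebraic rewriting of the conditional expectation, the bound by $\Pb_{\mu}(T_1\le t)$ using only the boundedness of $f$, and the observation that $\Pb_{\mu}(T_1\le t)$ is exponentially small in $1/t$ since both particles start at positive distance from the boundary. The only difference is that you spell out the reflection-principle estimate that the paper merely asserts.
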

\begin{proof}
One has
\begin{align*} 
\frac{1}{t}&\left(\Eb_{\mu}[f(J_t)]-\Eb_{\mu}[f(J_t)\mid T_1>t]\right)\\
&=\frac{1}{t}\frac{1}{\Pb_{\mu}\left(T_1>t\right)}\left(\Eb_{\mu}[f(J_t)]\Pb_{\mu}\left(T_1>t\right)-\Eb_{\mu}[f(J_t); T_1>t]\right)\\
&=\frac{1}{t}\frac{1}{\Pb_{\mu}\left(T_1>t\right)}\left(\Eb_{\mu}[f(J_t)]-\Eb_{\mu}[f(J_t)]\Pb_{\mu}\left(T_1\le t\right)-\Eb_{\mu}[f(J_t); T_1>t]\right)\\
&=\frac{1}{t}\frac{1}{\Pb_{\mu}\left(T_1>t\right)}\left(-\Eb_{\mu}[f(J_t)]\Pb_{\mu}\left(T_1\le t\right)+\Eb_{\mu}[f(J_t); T_1\le t]\right).
\end{align*}
As $\lim_{t\to 0}\Pb_{\mu}\left(T_1>t\right)=1$, it is enough to check that
$$\lim_{t\downarrow 0}\frac{1}{t}\left(\Eb_{\mu}[f(J_t); T_1\le t]-\Eb_{\mu}[f(J_t)]\Pb_{\mu}\left(T_1\le t\right)\right)=0.$$
Note that $\lim_{t\to 0}\Pb_{\mu}\left(T_1\le t\right)/t=0$
because if $(a-\epsilon, a+\epsilon)\subset (0,\pi)$ the probability of  hitting $(a-\epsilon, a+\epsilon)$ by any of the two particles by $t$  is exponentially small in $1/t$.
Since $f$ is bounded,  we are done.
\end{proof}

We have 
\begin{align*}
\E^h_{(a,b)}[f(W_1(t))\mid T_1>t]&=
\Eb_{\mu}[f(J_t)\mid \tau^1_1>\tau^1_2>t]\\
&=\Eb_{\mu}[f(J_t)\mid T_1>t,\ J_s=W_1(s),\ 0\le s<T_1],\\
\E^{1-h}_{(a,b)}[f(W_2(t))\mid T_1>t]&=
\Eb_{\mu}[f(J_t)\mid \tau^1_2>\tau^1_1>t]\\
&=\Eb_{\mu}[f(J_t)\mid T_1>t,\ J_s=W_2(s),\ 0\le s< T_1].
\end{align*}
An argument similar to the proof of Lemma \ref{le: cheating.okay} shows that 
\begin{align*}
\lim_{t\to 0}\frac{1}{t}\left(\E^h_{(a,b)}[f(W_1(t))\mid T_1>t]-f(a)\right)=\lim_{t\to 0}\frac{1}{t}\left(\E^h_{(a,b)}[f(W_1(t))]-f(a)\right).
\end{align*}
The right-hand side exists and equals
$$\frac{1}{2}f''(a)+\frac{h_x}{h}(a,b)
f'(a).
$$

\subsection{Main results}
In order to demonstrate that $J$ is not $H$-transformed Brownian motion, we will show that for all $0<a,b< \pi/2$,
\begin{align*}
&\lim_{t\to 0}\frac{1}{t}\left(\Eb_{\mu}[f(J_t)]-f(a)\mid J_s=W_1(s),\ 0\le s<T_1\right)=
\lim_{t\to 0}\frac{1}{t}\left(\E^h_{(a,b)}f(W_1(t))-f(a)\right)\\
&\quad \neq \frac{1}{2}f''(a)
+\frac{\phi'}{\phi}(a)f'(a).
\end{align*}
More precisely, the following two theorems hold. Recall that $\phi(x)=\sin(x)$, so $
\frac{\phi'}{\phi}(x)=\cot(x)$.
\begin{theorem}\label{prop:new1}
\begin{align}\label{j19.3}
\frac{\phi'}{\phi}(\pi/4)- \frac{ h_x}{h}(\pi/4,\pi/4)
&> 0,\\
\frac{\phi'}{\phi}(\pi/4)- \frac{ h_x}{h}(\pi/4,\pi/4)
& \approx 0.248532.\label{j19.4}
\end{align}
\end{theorem}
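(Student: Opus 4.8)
The plan is to compute both quantities in \eqref{j19.3}–\eqref{j19.4} explicitly. The term $\frac{\phi'}{\phi}(\pi/4) = \cot(\pi/4) = 1$ is immediate, so everything reduces to showing $\frac{h_x}{h}(\pi/4,\pi/4) < 1$ and computing its value to the stated precision. The function $h$ solves the boundary value problem on the square $(0,\pi)^2$ with $h = 0$ on the left and right sides $\{x=0\}$ and $\{x=\pi\}$, and $h = 1$ on the top and bottom sides. First I would write $h$ as a Fourier sine series in $x$: expanding the constant boundary data $1$ on the top and bottom in the basis $\{\sin(nx)\}$ and solving the resulting ODE in $y$ gives
\begin{align*}
h(x,y) = \sum_{n \text{ odd}} \frac{4}{n\pi}\,\frac{\cosh\!\bigl(n(y-\pi/2)\bigr)}{\cosh(n\pi/2)}\,\sin(nx).
\end{align*}
Differentiating term by term, $h_x(x,y) = \sum_{n\text{ odd}} \frac{4}{\pi}\,\frac{\cosh(n(y-\pi/2))}{\cosh(n\pi/2)}\cos(nx)$. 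Evaluating at $(x,y) = (\pi/4,\pi/4)$ makes the $y$-dependent factor $\frac{\cosh(n\pi/4)}{\cosh(n\pi/2)}$ and the $x$-dependent factor $\cos(n\pi/4)$ for $n$ odd, and $\sin(n\pi/4)$ for $h$ itself.

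The main step is then to evaluate the two numerical series $h(\pi/4,\pi/4) = \sum_{n\text{ odd}} \frac{4}{n\pi}\frac{\cosh(n\pi/4)}{\cosh(n\pi/2)}\sin(n\pi/4)$ and $h_x(\pi/4,\pi/4) = \sum_{n\text{ odd}} \frac{4}{\pi}\frac{\cosh(n\pi/4)}{\cosh(n\pi/2)}\cos(n\pi/4)$ and take their ratio. Both series converge geometrically, since $\frac{\cosh(n\pi/4)}{\cosh(n\pi/2)} \sim 2 e^{-n\pi/4}$ as $n\to\infty$, so a rigorous bound on the tail after finitely many terms is routine; summing, say, the first several odd terms and controlling the remainder with a geometric majorant pins down $\frac{h_x}{h}(\pi/4,\pi/4)$ to well within the precision needed to conclude it equals $1 - 0.248532\ldots$, hence is $<1$. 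We will use {\sf Mathematica} to carry out the summation and produce the approximate value $0.248532$; the inequality \eqref{j19.3} follows from any explicit lower bound on the tail-truncated difference. Alternatively, one can exploit the symmetry of the square: by the symmetry $x \leftrightarrow \pi - x$ combined with the conformal structure, there may be a closed form for $h$ at the center-type point $(\pi/4,\pi/4)$, but this is not needed.

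I expect the only genuine obstacle to be bookkeeping: justifying term-by-term differentiation of the sine series up to the boundary is not literally valid at the corners, but at the interior point $(\pi/4,\pi/4)$ the series for $h_x$ converges absolutely and uniformly on compact subsets of the open square, so differentiation under the sum is legitimate there. A secondary point is making the numerical inequality \eqref{j19.3} fully rigorous rather than merely numerically convincing; this is handled by the explicit geometric tail bound mentioned above, which turns the {\sf Mathematica}-assisted computation into a human-verifiable estimate. Everything else—identifying $\cot(\pi/4)=1$, setting up the Fourier series, reading off the coefficients—is standard.
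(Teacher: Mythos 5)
Your setup is the same as the paper's: separation of variables on the square, a Fourier sine series in $x$ with only odd modes, and evaluation of $h$ and $h_x$ at $(\pi/4,\pi/4)$. (Your coefficient $\cosh(n(y-\pi/2))/\cosh(n\pi/2)$ is algebraically identical to the paper's $F_n(y)-\coth(n\pi)G_n(y)$ normalized by $a_n$, so the series agree; your tail asymptotic should be $\sim e^{-n\pi/4}$ rather than $2e^{-n\pi/4}$, which is immaterial.) Where you diverge is in how the strict inequality \eqref{j19.3} is certified. You propose truncating both series and bounding the tails by explicit geometric majorants, which is rigorous but keeps the argument essentially numerical and requires controlling two series (you could spare yourself one of them: by the diagonal symmetry $h(y,x)=1-h(x,y)$ one gets $h(\pi/4,\pi/4)=1/2$ exactly, which the paper uses). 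The paper instead exploits the sign pattern of $\cos((2k+1)\pi/4)$ together with a monotonicity lemma for $t\mapsto F_t(\pi/4)-\coth(t\pi)G_t(\pi/4)$ to show that the groups of four consecutive terms from $k=1$ onward are each negative, so $h_x(\pi/4,\pi/4)$ is bounded above by its single $k=0$ term $\frac{2\sqrt2}{\pi}\cosh(\pi/4)\,\mathrm{sech}(\pi/2)\approx 0.4753$, giving $1-2h_x>1-2S>0$ with only one closed-form evaluation. Both routes are valid; the paper's buys a more human-verifiable inequality at the cost of the monotonicity lemma, while yours is more mechanical and generalizes trivially to other evaluation points, and for the approximation \eqref{j19.4} both ultimately rely on the same truncated-sum computation.
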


\begin{theorem}\label{j19.7}
For $0<x,y<\pi/2$,
\begin{align*}
\frac{\phi'}{\phi}(x) >\frac{h_x(x,y)}{h(x,y)} .
\end{align*}
By symmetry, analogous inequalities hold in the other parts of $[0,\pi]^2$.
\end{theorem}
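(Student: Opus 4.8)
The plan is to recast the inequality as a sign condition on a partial derivative, and then to settle that sign by a maximum principle on the quarter-square $Q:=(0,\pi/2)^2$. Since $\phi(x)=\sin x$, and $0<h<1$ inside $(0,\pi)^2$ by the maximum principle, the claim $\frac{\phi'}{\phi}(x)>\frac{h_x(x,y)}{h(x,y)}$ is equivalent to $\frac{\partial}{\partial x}\log\frac{\sin x}{h(x,y)}>0$ on $Q$, i.e.\ to $v:=q_x<0$ on $Q$, where $q:=h/\sin x>0$.

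To set this up I would first record the relevant equations. Substituting $h=q\sin x$ into $\Delta h=0$ gives $q_{xx}+q_{yy}+2\cot x\,q_x-q=0$, and differentiating in $x$ gives
\[
v_{xx}+v_{yy}+2\cot x\,v_x-\bigl(1+2\csc^2 x\bigr)v=0 .
\]
The decisive point is that the zeroth-order coefficient $-(1+2\csc^2 x)$ is strictly negative on $(0,\pi/2)$, so this operator obeys the strong maximum principle and Hopf's lemma in the interior of $Q$; the coefficients $\cot x$ and $\csc^2 x$ blow up at $x=0$, but this only needs to be accommodated in the boundary analysis.

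Next I would read off the behaviour of $v$ on $\partial Q$, using the two reflection symmetries of $h$---invariance under $x\mapsto\pi-x$ and under $y\mapsto\pi-y$, which hold because the boundary data of $h$ have them. On $\{x=\pi/2\}$ one has $h_x=0$, so $v=0$. On $\{y=\pi/2\}$ one has $h_y=0$, so $q_y=0$ along that edge and hence $v_y=\partial_x(q_y)=0$: a homogeneous Neumann condition. On $\{y=0\}$ one has $h\equiv1$, so $q=1/\sin x$ and $v=-\cos x/\sin^2 x<0$. On $\{x=0\}$ (for $0<y\le\pi/2$), from $h(0,\cdot)\equiv0$ (so $h_{xx}(0,\cdot)=-h_{yy}(0,\cdot)=0$) the Taylor expansion of $h$ yields $v(x,y)=\tfrac{x}{3}\bigl(h_x(0,y)+h_{xxx}(0,y)\bigr)+O(x^3)\to0$ as $x\to0^+$, so $v$ extends continuously by $0$ to that edge. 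Finally, near the corner $(0,0)$, where $h(x,y)=\tfrac2\pi\arctan(x/y)+o(\sqrt{x^2+y^2})$ with the error vanishing on $\{x=0\}$, a short computation gives $v(x,y)=\tfrac{2}{\pi x^2}\bigl(\tfrac{xy}{x^2+y^2}-\arctan\tfrac{x}{y}\bigr)+o(1)$; since $\tfrac{s}{1+s^2}\le\arctan s$ for all $s\ge0$, the leading term is $\le0$, so $\limsup v\le0$ at $(0,0)$ as well.

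With this in hand the maximum principle finishes the argument. If $\sup_Q v=M>0$, then, $v$ being bounded above with $\limsup v\le0$ along $\{x=0\}$, $\{x=\pi/2\}$, $\{y=0\}$ and at $(0,0)$, the value $M$ would be attained either at an interior point of $Q$---forbidden by the strong maximum principle, since it would force $v\equiv M>0$, contradicting the values on $\{y=0\}$---or on the open Neumann edge $\{y=\pi/2\}$, forbidden by Hopf's lemma, since the outward normal derivative $v_y$ there would be strictly positive whereas it vanishes. Hence $v\le0$ on $Q$, and the strong maximum principle upgrades this to $v<0$ throughout $Q$ (interior equality would again force $v\equiv0$). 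Undoing the substitution gives the theorem, and the other three sub-squares of $[0,\pi]^2$ follow from the reflection symmetries of $h$ and $\phi$. The step I expect to be the main obstacle is the boundary analysis for $v$---pinning down the limiting behaviour on $\{x=0\}$ and, above all, at the singular corner $(0,0)$, precisely where the coefficients of the equation for $v$ degenerate---together with the care needed to apply Hopf's lemma at the mixed Dirichlet--Neumann boundary of $Q$; the interior maximum principle itself is then routine.
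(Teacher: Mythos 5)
Your route is genuinely different from the paper's. The paper deduces the theorem from two facts obtained via an explicit Schwarz--Christoffel map of the square onto the disk: that $y\mapsto h_x/h$ is increasing on $(0,\pi/2)$ for each fixed $x$ (through a closed-form expression for $hh_{xy}-h_xh_y$ whose sign is visible), and that $h_x/h<\cot x$ on the midline $y=\pi/2$. Your substitution $q=h/\sin x$, $v=q_x$, the derived equation $v_{xx}+v_{yy}+2\cot x\,v_x-(1+2\csc^2x)\,v=0$, and the boundary data you record ($v\to0$ on $x=0$ and on $x=\pi/2$, $v<0$ on $y=0$, $v_y=0$ on $y=\pi/2$) are all correct, and the maximum-principle/Hopf argument on the mixed boundary is sound (you could even avoid Hopf by reflecting evenly across $y=\pi/2$, which the symmetry of $h$ permits, making that edge interior). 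This is an attractive, more conceptual alternative that bypasses conformal mapping entirely.

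The one genuine flaw is exactly where you anticipated trouble: the corner $(0,0)$. The displayed expansion $v=\tfrac{2}{\pi x^2}\bigl(\tfrac{xy}{x^2+y^2}-\arctan\tfrac{x}{y}\bigr)+o(1)$ is false: replacing $\sin x$ by $x$ and $\cos x$ by $1$ in $v=(h_x\sin x-h\cos x)/\sin^2x$ costs an error that is $O(1)$, not $o(1)$. Writing $g=\tfrac{2}{\pi}\arctan(x/y)$, a careful expansion gives $v=\tfrac{xg_x-g}{x^2}+\tfrac16(xg_x+g)+o(1)$, and the extra term $\tfrac16(xg_x+g)$ is nonnegative and does not vanish (along $x=y$ it tends to $\tfrac16(\tfrac12+\tfrac1\pi)$), so your formula as stated does not yield $\limsup v\le0$. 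The conclusion is nevertheless true and can be rescued: for $x\ge y$ the leading term is at most $-\tfrac{2}{\pi x^2}(\tfrac{\pi}{4}-\tfrac12)\to-\infty$ and swamps the bounded correction; for $x\le y$, with $s=x/y$ one has $\arctan s-\tfrac{s}{1+s^2}=\int_0^s\tfrac{2u^2}{(1+u^2)^2}\,du\ge\tfrac{s^3}{6}$, so the leading term is at most $-\tfrac{1}{3\pi}\tfrac{x}{y^3}$ while the correction is at most $\tfrac{2}{3\pi}\tfrac{x}{y}$, and $x/y^3$ dominates $x/y$ as $y\to0$. You must also justify that the remainder $h-g$ contributes only $o(1)$ to $v$: since it vanishes on both edges near the corner, it extends by double odd reflection to a harmonic function whose Taylor expansion consists of terms divisible by $xy$, and each such term survives the division by $x^2$ with order $O(x^2+y^2)$. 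With these repairs your proof goes through.
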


The following result shows the difference between the two laws in a different way.
\begin{theorem}\label{s26.1}
Suppose that $J_0=X_0$, a.s.
The distributions of the spine $\{J_t, 0\leq t <\infty\}$ and Brownian motion conditioned to stay in $(0,\pi)$ forever, i.e., $\{X_t, 0\leq t <\infty\}$ are singular.
\end{theorem}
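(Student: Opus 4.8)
The plan is to distinguish the laws of $\{J_t,\,0\le t<\infty\}$ and $\{X_t,\,0\le t<\infty\}$ by the long-run occupation measures of the paths. Brownian motion conditioned to stay in $(0,\pi)$ forever is the one-dimensional diffusion with unit diffusion coefficient and drift $\cot$, hence is positive recurrent with stationary probability measure $\pi_X(dx)=\tfrac2\pi\sin^2x\,dx$, and by the ergodic theorem $\frac1T\int_0^Tg(X_s)\,ds\to\pi_X(g)$ almost surely, for every bounded continuous $g$. Therefore it suffices to produce a bounded continuous $g$ and a constant $\gamma\neq\pi_X(g)$ such that $\frac1T\int_0^Tg(J_s)\,ds\to\gamma$ almost surely: the path-space event ``$\frac1T\int_0^Tg(\omega_s)\,ds\to\pi_X(g)$'' then has full measure under the law of $X$ and zero measure under the law of $J$, so the two laws are singular. (This uses nothing about the common starting point beyond what is assumed.)

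To identify such a $\gamma$, I would first observe that $\{J_t\}$ has the same law as the first coordinate of an honest time-homogeneous Markov process $\wt Z$ on $(0,\pi)^2$: between consecutive branching times $\wt Z$ runs as Doob's $h$-process $\cV^h$ (so $\wt Z^{(1)}=\cV^h_1$, $\wt Z^{(2)}=\cV^h_2$), and at each branching time---the instant $\wt Z^{(2)}$ reaches $\{0,\pi\}$---it jumps to the diagonal point $(\wt Z^{(1)},\wt Z^{(1)})$ and restarts. Indeed, over one cycle the two particles run as independent Brownian motions from a diagonal point until one of them exits; since $h(x,y)=\P^{BM}_{(x,y)}(\tau^1_2<\tau^1_1)$ and the two particles are exchangeable, the pair (survivor, non-survivor) is, in law, exactly $\cV^h$, and the survivor is precisely the spine on that cycle. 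The process $\wt Z$ is regenerative, the regeneration epochs being the branching times $T_k$ and the regeneration states the diagonal points $(Y_k,Y_k)$; the embedded chain $(Y_k)$ is an irreducible Markov chain on $(0,\pi)$ whose stationary law is uniform by the first main result of \cite{MK}, and each cycle has finite expected length. The ergodic theory of regenerative processes then furnishes a stationary probability measure $\rho$ for $\wt Z$ on $(0,\pi)^2$, which is absolutely continuous and assigns positive mass to every nonempty open set, and gives $\frac1T\int_0^Tg(J_s)\,ds\to\rho_1(g)$ almost surely, with $\rho_1$ the first marginal of $\rho$. So $\gamma=\rho_1(g)$, and it remains to show $\rho_1\neq\pi_X$.

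Suppose $\rho_1=\pi_X$. Applying the martingale problem for $\wt Z$ to $F(x,y)=g(x)$ with $g\in C_c^2((0,\pi))$, and noting that such $F$ is unchanged by every branching jump (which moves only the second coordinate), one gets that $g(\wt Z^{(1)}_t)-g(\wt Z^{(1)}_0)-\int_0^t\big(\tfrac12 g''+\tfrac{h_x}{h}g'\big)(\wt Z_s)\,ds$ is a martingale; taking expectations in the stationary regime yields $\int_{(0,\pi)^2}\big(\tfrac12g''(x)+\tfrac{h_x}{h}(x,y)g'(x)\big)\rho(dx\,dy)=0$, while stationarity of $\pi_X$ for the generator $\tfrac12\partial_x^2+\cot x\,\partial_x$ gives $\int_0^\pi\big(\tfrac12g''(x)+\cot x\,g'(x)\big)\pi_X(dx)=0$. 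Subtracting, and using $\rho_1=\pi_X$,
\[
\int_{(0,\pi)^2}\frac{h_x}{h}(x,y)\,g'(x)\,\rho(dx\,dy)=\int_0^\pi\cot x\,g'(x)\,\rho_1(dx)
\qquad\text{for every }g\in C_c^2((0,\pi)).
\]
Take $g$ with $g'\ge0$, $g'\not\equiv0$ and $\operatorname{supp}g'\subset(0,\pi/2)$. By Theorem \ref{j19.7} together with the symmetry $h(x,y)=h(x,\pi-y)$, one has $\tfrac{h_x}{h}(x,y)<\cot x$ for all $x\in(0,\pi/2)$ and all $y\in(0,\pi)\setminus\{\pi/2\}$; since $\rho$ does not charge the line $\{y=\pi/2\}$ and the measure $g'(x)\rho(dx\,dy)$ is nontrivial and supported in $(0,\pi/2)\times(0,\pi)$, the left-hand side is strictly less than $\int_{(0,\pi/2)\times(0,\pi)}\cot x\,g'(x)\,\rho(dx\,dy)=\int_0^\pi\cot x\,g'(x)\,\rho_1(dx)$, contradicting the displayed identity. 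Hence $\rho_1\neq\pi_X$; choosing $g$ with $\rho_1(g)\neq\pi_X(g)$ completes the proof.

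The generator computation and the final contradiction are short and use only Theorem \ref{j19.7} and the symmetries of $h$. The main obstacle will be the middle step: making the Markov/regenerative description of the spine precise (in particular the identification of the relabelled cycle with $\cV^h$), extracting from \cite{MK} plus a finite-mean-cycle-length estimate both the stationary measure $\rho$ and the almost sure ergodic limit $\rho_1(g)$ for the spine, and verifying the regularity of $\rho$ (absolute continuity, positivity on open sets) that makes the Theorem \ref{j19.7} inequality strict after integration.
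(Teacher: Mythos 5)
Your opening reduction (distinguish the two laws by almost-sure occupation limits) is the same as the paper's, but from there you take a genuinely different route. The paper never sets up a Markov/regenerative structure for the spine: it writes the spine's stationary occupation density directly as $2G(x,y)p(x,y)\,dx\,dy$, using Kwa\'snicki's theorem that the normalized Green function $G$ is the stationary density of the two-particle process together with the observation that, given the pair is at $(x,y)$, the probability that the $x$-particle is the spine on the current cycle is $p(x,y)$. It then lower-bounds $p(x,y)$ by explicit harmonic-measure estimates (Lemma \ref{j4.1}) and finds that the occupation of $(0,\eps)$ is of order $\eps^3\log(1/\eps)$ for the spine versus $\eps^3$ for the conditioned Brownian motion. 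You instead rule out $\rho_1=\pi_X$ abstractly through a stationarity/generator identity combined with the strict drift inequality of Theorem \ref{j19.7}. This buys conceptual economy (it reuses Theorem \ref{j19.7} and needs no Green-function asymptotics or harmonic-measure estimates, and in principle needs from \cite{MK} only ergodicity of the embedded chain $(Y_k)$ rather than the explicit stationary density), at the price of the infrastructure you yourself flag: the identification of the relabelled cycle with $\cV^h$ (which is correct, by exchangeability from a diagonal starting point), Harris ergodicity of $(Y_k)$, finite mean cycle length, existence and regularity of $\rho$, and the validity of $\int\calL F\,d\rho=0$ for the jump-diffusion $\wt Z$. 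All of these are believable and standard, but none is needed in the paper's proof, which is also more informative (it exhibits the actual discrepancy in the boundary behavior of the two stationary densities).

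One concrete repair is required in your last step: there is no nonzero $g\in C_c^2((0,\pi))$ with $g'\ge 0$ and $g'\not\equiv 0$, since a compactly supported nondecreasing function is identically zero; the test function you ``take'' does not exist. Two fixes are available. Either enlarge the test class to bounded $g\in C^2$ with $g'$, $g''$ compactly supported in $(0,\pi/2)$ (a smoothed step function); the martingale and stationarity identities use only boundedness of $g$, $g'$, $g''$ and of $(h_x/h)\,g'$, and the latter holds because $h$ is bounded below on $\operatorname{supp}(g')\times[0,\pi]$. Or observe that the identity for all $g\in C_c^2$ determines the signed measure $\nu(dx)=\int_y\bigl(\tfrac{h_x}{h}(x,y)-\cot x\bigr)\rho(dx\,dy)$ only up to an additive multiple of Lebesgue measure (since $\{g':g\in C_c^2\}$ consists of functions with zero integral), and then use the antisymmetry of $\nu$ under $x\mapsto\pi-x$ to eliminate that multiple before invoking the strict negativity of the integrand on $(0,\pi/2)\times(0,\pi)$. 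With either repair the contradiction goes through; note also that Proposition \ref{j30.2} already gives the strict inequality on the line $y=\pi/2$, so your caveat about $\rho$ not charging that line is unnecessary.
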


The proofs of the three theorems will be given in Sections  \ref{midpoint}, \ref{conformal} and \ref{sec:stat}. 

\section{Drift comparison at a branching time at ``midpoint''}\label{midpoint}

We will use Fourier techniques to compare the drifts of the spine at a branching time and Brownian motion conditioned to 
stay in $(0,\pi)$ forever. In this section, we will limit our calculations to a specific position of the particles at the branching time, namely, $\pi/4$. 

\begin{proof}[Proof of Theorem \ref{prop:new1}]

The  function $h$ is the unique bounded solution to the boundary value problem 
\begin{align}\label{eq: BVP_h}
\begin{cases}
  \Delta h=0    \text{ on } (0,\pi)\times (0,\pi),\\
 \lim_{x\to 0}h(x,y)=\lim_{x\to \pi}h(x,y)=0,\  y\in (0,\pi),\\  
\lim_{y\to 0}h(x,y)=\lim_{y\to \pi}h(x,y)=1,\  x\in (0,\pi).\end{cases}
\end{align}
Existence follows from Theorem 1.3 in \cite[page 5]{GM}. Uniqueness follows from Lindel\"of's maximum principle, Lemma 1.1 in \cite[page 2]{GM}. 

Using the method of separation of variables, the solution has the following infinite series representation (see e.g., Lec. 34 in \cite{AO}).
 
For $n\geq 1$ and $x,y\in(0,1)$, let
\begin{align*}
F_n(y)&=\cosh(ny)+\cosh(n(\pi-y)),\\
G_n(y)&=\sinh(ny)+\sinh(n(\pi-y)),\\
a_n&=(2/\pi)\int_0^{\pi}\sin (nx)\, dx=\frac{2}{\pi}\cdot\frac{1-(-1)^n}{n},\\
\cX_n(x)&=\sin(nx) ,\\
\cY_n(y)&=a_n (F_n(y)-\coth(n\pi)G_n(y)).
\end{align*}
Then
\begin{align*}
h(x,y)&=\sum_{n=1}^{\infty} \cX_n(x)\cY_n(y).
\end{align*}

Let $\gamma_k=\coth((2k+1)\pi)$ and note that $a_{2k}=0$, while $a_{2k+1}=\frac{1}{\pi}\cdot\frac{4}{2k+1}$. We have
\begin{align}\notag
h(x,y)&=\frac{4}{\pi}\sum_{k=0}^{\infty} \frac{\sin((2k+1)x)}{2k+1}\left[F_{2k+1}(y)-\gamma_kG_{2k+1}(y)\right],\\
 h_x(x,y)&=\frac{4}{\pi}\sum_{k=0}^{\infty} \cos((2k+1)x)\left[F_{2k+1}(y)-\gamma_kG_{2k+1}(y)\right],\notag\\
 h_x(\pi/4,\pi/4)&=\frac{4}{\pi}\sum_{k=0}^{\infty} \cos((2k+1)\pi/4)\left[F_{2k+1}(\pi/4)-\gamma_kG_{2k+1}(\pi/4)\right]. \label{j19.5}
\end{align}

Note that 
\begin{align}\label{j19.1}
\{\sqrt{2}\cos((2k+1)\pi/4), k=0,1,2,\dots\}
= 1,-1,-1,1,1,-1,-1,1,1,-1,-1,1,1, \dots
\end{align}

We extend the definitions of $F_n(y)$ and $G_n(y)$ to  positive real values of the parameter $n$ and we let $c_t=F_{t}(\pi/4)-\coth(t\pi) G_{t}(\pi/4)$ for $t>0$.

\begin{lemma}
The function $t\to c_t$ is decreasing.
\end{lemma}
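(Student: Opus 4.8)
The plan is to simplify $c_t$ to an elementary ratio of hyperbolic cosines, after which the monotonicity is immediate.

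First I would recall that $F_t(\pi/4)=\cosh(t\pi/4)+\cosh(3t\pi/4)$ and $G_t(\pi/4)=\sinh(t\pi/4)+\sinh(3t\pi/4)$, and apply the sum-to-product identities $\cosh A+\cosh B=2\cosh\tfrac{A+B}{2}\cosh\tfrac{A-B}{2}$ and $\sinh A+\sinh B=2\sinh\tfrac{A+B}{2}\cosh\tfrac{A-B}{2}$ with $A=t\pi/4$ and $B=3t\pi/4$. Since $\cosh$ is even, this yields
\[
F_t(\pi/4)=2\cosh(t\pi/2)\cosh(t\pi/4),\qquad
G_t(\pi/4)=2\sinh(t\pi/2)\cosh(t\pi/4).
\]
Hence
\[
c_t=2\cosh(t\pi/4)\,\frac{\cosh(t\pi/2)\sinh(t\pi)-\cosh(t\pi)\sinh(t\pi/2)}{\sinh(t\pi)}.
\]
By the subtraction formula for $\sinh$, the numerator equals $\sinh(t\pi-t\pi/2)=\sinh(t\pi/2)$, and $\sinh(t\pi)=2\sinh(t\pi/2)\cosh(t\pi/2)$, so after cancellation
\[
c_t=\frac{\cosh(t\pi/4)}{\cosh(t\pi/2)}.
\]

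Next I would prove that $u\mapsto \cosh(u)/\cosh(2u)$ is strictly decreasing on $(0,\infty)$; applying this with $u=t\pi/4$ then finishes the proof. The quickest route is the substitution $v=\cosh u$, which increases strictly from $1$ to $\infty$ as $u$ runs over $[0,\infty)$; since $\cosh(2u)=2\cosh^2 u-1=2v^2-1$, we have $\cosh(u)/\cosh(2u)=v/(2v^2-1)$, and $\frac{d}{dv}\big(v/(2v^2-1)\big)=-(2v^2+1)/(2v^2-1)^2<0$. Equivalently, one can differentiate directly: the numerator of $\frac{d}{du}\big(\cosh u/\cosh 2u\big)$ is $\sinh u\cosh 2u-2\cosh u\sinh 2u=-\sinh u\,(2\cosh^2 u+1)$, which is negative for $u>0$.

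There is no serious obstacle here: the only point needing a bit of care is spotting that the combination $F_t(\pi/4)-\coth(t\pi)G_t(\pi/4)$ telescopes---via sum-to-product followed by a difference formula---down to $\cosh(t\pi/4)/\cosh(t\pi/2)$. I would display the identities explicitly so the computation is verifiable by hand, and note that it has also been checked with {\sf Mathematica}.
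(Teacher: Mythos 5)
Your proof is correct, and it is worth noting that it improves on the paper's route in one respect. The paper's proof differentiates $c_t$ in its original form and relies on {\sf Mathematica} to produce and simplify the derivative to $-\tfrac{1}{4}\pi\sinh(\pi t/4)\left(\cosh(\pi t/2)+2\right)\operatorname{sech}^2(\pi t/2)$. You instead first collapse $c_t$ by hand: the sum-to-product identities give $F_t(\pi/4)=2\cosh(t\pi/2)\cosh(t\pi/4)$ and $G_t(\pi/4)=2\sinh(t\pi/2)\cosh(t\pi/4)$, the subtraction formula gives $\cosh(t\pi/2)\sinh(t\pi)-\cosh(t\pi)\sinh(t\pi/2)=\sinh(t\pi/2)$, and the double-angle formula for $\sinh(t\pi)$ yields the clean closed form $c_t=\cosh(t\pi/4)/\cosh(t\pi/2)$; monotonicity then follows from the elementary fact that $v/(2v^2-1)$ is decreasing in $v=\cosh u\ge 1$. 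I checked each identity and the final derivative computation (your $-\sinh u\,(2\cosh^2u+1)$ is exactly the paper's $-\sinh(\pi t/4)(\cosh(\pi t/2)+2)$ after substituting $2\cosh^2u+1=\cosh 2u+2$), so the two arguments prove the same inequality; what yours buys is a fully human-verifiable derivation with no essential software step, which is in the spirit of the paper's stated preference for such proofs.
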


\begin{proof}
Using  {\sf Mathematica},
\begin{align*}
\frac{dc_t}{dt}&=\frac{\pi }{4}\sinh \left(\frac{\pi t}{4}\right)+\pi \csch ^2\left(\pi t\right)\sinh \left(\frac{\pi t}{4}\right)-\coth \left(\pi t\right)\frac{\pi }{4}\cosh \left(\frac{\pi t}{4}\right)\\
&\qquad+\frac{3\pi }{4}\sinh \left(\frac{3\pi t}{4}\right)+\pi \csch ^2\left(\pi t\right)\sinh \left(\frac{3\pi t}{4}\right)-\coth \left(\pi t\right)\frac{3\pi }{4}\cosh \left(\frac{3\pi t}{4}\right)\\
&=-\frac{1}{4} \pi  \sinh \left(\frac{\pi  t}{4}\right) \left(\cosh \left(\frac{\pi  t}{2}\right)+2\right) \text{sech}^2\left(\frac{\pi  t}{2}\right).
\end{align*}
The last expression is negative for $t>0$.
\end{proof}

The lemma and \eqref{j19.1} imply that  for every $j\geq0$,
\begin{align}\label{a21.1}
\sum_{k=1+4j}^{4+4j} \cos((2k+1)\pi/4)\left[F_{2k+1}(\pi/4)-\gamma_kG_{2k+1}(\pi/4)\right]<0.
\end{align}
Grouping the terms in \eqref{j19.5} by four, starting with $k=1$, one obtains a Leibniz-type series.
This observation and \eqref{a21.1} applied with $j=0$ yield
\begin{align*}
 h_x(\pi/4,\pi/4)&<S:=
\frac{4}{\pi} \cos(\pi/4)\left[F_{1}(\pi/4)-\coth(\pi)
G_{1}(\pi/4)\right]\\
&= \frac{2 \sqrt{2}} \pi \cosh \left(\frac{\pi }{4}\right) \text{sech}\left(\frac{\pi }{2}\right)
\approx 0.475282.
\end{align*}
The results on the last line were obtained using Mathematica.
By symmetry, $h(\pi/4,\pi/4)=1/2$, so
\begin{align*}
\cot&(\pi/4)- \frac{ h_x}{h}(\pi/4,\pi/4)
= 1 - 2  h_x(\pi/4,\pi/4)
> 1-2 S\\
&=1-2 
 \frac{2 \sqrt{2}} \pi \cosh \left(\frac{\pi }{4}\right) \text{sech}\left(\frac{\pi }{2}\right)
\approx 1-2 \cdot 0.475282 =0.0494362
> 0.
\end{align*}
This completes the proof of \eqref{j19.3}.

The approximation stated in \eqref{j19.4} was computed using {\sf Mathematica}. A partial sum up to  $k=5$ in \eqref{j19.5} gives the accuracy of six significant digits.
\end{proof}

\section{Analysis of a harmonic function via conformal mappings}\label{conformal}

 The Dirichlet problem \eqref{eq: BVP_h} can be transplanted from the square to any convenient Jordan domain by means of a conformal map.
(A bounded region in $\C$ whose boundary is a closed Jordan curve is called a Jordan domain). Indeed, if $\Omega$ is a Jordan domain, then there is a conformal map $f$ of the unit disk $D$ onto $\Omega$ which extends to be a homeomorphism of their closures,  $\overline{D}$ onto 
$\overline{\Omega}$, by the Riemann mapping theorem and Carath\'eodory's theorem  \cite[page~13]{GM}.  For example, if $h$ is the solution to \eqref{eq: BVP_h} on the square $S$ then $h\circ f$ is the solution on the disk with boundary values $0$ and $1$ on the (open) arcs of $\partial D$ corresponding to the boundary edges of $S$. Here we use the fact that a harmonic function on $S$ is the real part of an analytic function, and hence $h\circ f$ is also harmonic.
   In this section, we will transfer our problem from the square $(0,\pi)^2$ to the disk. The southwest quarter of the square corresponds to the southwest quadrant of the disk under our map. We examine partial derivatives on the disk instead of the square by the chain rule, but simplify their expressions using a conformal map of the quadrant onto a chosen subset of the right half plane. It is this transformation that allows us to make the somewhat delicate estimates of the partial derivatives.

 We will use both complex and real notation---the meaning should be obvious from the context.

The following claim is a special case of the Schwarz-Christoffel formula (see \cite{Ahlfors}, Ch. 6, Sect. 2.2, Exercise 5). Let $D:= \{z\in \C: |z|<1\}$.
The analytic function
\begin{align}\label{j12.8}
\vphi(z)= \int_0^z \frac 1 {\sqrt{1+w^4}} dw
\end{align}
is a conformal map of $D$ onto a square with vertices $\vphi(e^{i\pi/4}), \vphi(e^{i3\pi/4}), \vphi(e^{i5\pi/4}), \vphi(e^{i7\pi/4})$.

We are using the function $\int_0^z \frac 1 {\sqrt{1+w^4}}\, dw$ instead of the more traditional $\int_0^z \frac 1 {\sqrt{1-w^4}}\, dw$ to avoid a rotation.

\begin{remark}\label{rem:1}

If $z=r e^{ik\pi/4}$ then $$\varphi(z)=\left(\frac{1}{r}\int_0^{r} \frac 1 {\sqrt{1\pm t^4}}\, dt\right)\, z,$$
where $+$ corresponds to even $k$ and $-$ to odd $k$. That is,  rays in $D$
emanating from $0$ at angles that are multiples of $\pi/4$ are mapped by $\varphi$
onto rays in the square with the same slopes.
\end{remark}

\begin{lemma}\label{j14.4}
Let $c>0$ be such that $c\vphi$ maps the unit disc onto $(-\pi/2,\pi/2)^2$.
Then
\begin{align}
&c= 4\pi^{3/2}(\Gamma(1/4))^{-2}\approx 1.69443,
\end{align}
where $\Gamma$ is Euler's  Gamma function. 
\end{lemma}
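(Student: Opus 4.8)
The plan is to read off $c$ as the ratio of the side lengths of two squares. By the Schwarz--Christoffel claim recalled above, $\vphi$ maps $D$ conformally onto a square with vertices $\vphi(e^{i\pi/4}),\vphi(e^{i3\pi/4}),\vphi(e^{i5\pi/4}),\vphi(e^{i7\pi/4})$; by Remark \ref{rem:1} each of these vertices lies on the ray from $0$ of the corresponding slope, and $\vphi$ carries the positive real axis to itself (since $1+w^4$ has real coefficients, $\vphi(\bar z)=\overline{\vphi(z)}$). Hence $\vphi(D)$ is a square centered at $0$ with horizontal and vertical sides (its vertices lie on the lines $y=\pm x$), and $\vphi(1)=\int_0^1 dt/\sqrt{1+t^4}$ is the point where its right-hand side meets the real axis. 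In particular a single positive dilation $c\vphi$ does carry $D$ onto $(-\pi/2,\pi/2)^2$ --- no rotation is needed, which is precisely why \eqref{j12.8} uses $\sqrt{1+w^4}$ rather than $\sqrt{1-w^4}$ --- and comparing the right-hand sides of $c\vphi(D)$ and $(-\pi/2,\pi/2)^2$ gives $c\,\vphi(1)=\pi/2$, i.e.
\[
c=\frac{\pi}{2}\left(\int_0^1\frac{dt}{\sqrt{1+t^4}}\right)^{-1}.
\]

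The remaining task is to evaluate $\int_0^1 dt/\sqrt{1+t^4}$ in closed form. The substitution $t\mapsto 1/s$ gives $\int_1^\infty dt/\sqrt{1+t^4}=\int_0^1 ds/\sqrt{1+s^4}$, hence $\int_0^1 dt/\sqrt{1+t^4}=\tfrac12\int_0^\infty dt/\sqrt{1+t^4}$; and the substitution $u=t^4$ then turns this into a Beta integral,
\[
\int_0^\infty\frac{dt}{\sqrt{1+t^4}}=\frac14\int_0^\infty u^{-3/4}(1+u)^{-1/2}\,du=\frac14\,B\!\left(\tfrac14,\tfrac14\right)=\frac{\Gamma(1/4)^2}{4\,\Gamma(1/2)}=\frac{\Gamma(1/4)^2}{4\sqrt{\pi}}.
\]
Thus $\vphi(1)=\Gamma(1/4)^2/(8\sqrt{\pi})$, and substituting into the displayed formula for $c$ yields $c=4\pi^{3/2}\Gamma(1/4)^{-2}$; inserting $\Gamma(1/4)\approx 3.62561$ gives $c\approx 1.69443$, as claimed.

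As a cross-check one may instead use a vertex: Remark \ref{rem:1} gives $\vphi(e^{i\pi/4})=\bigl(\int_0^1 dt/\sqrt{1-t^4}\bigr)e^{i\pi/4}$, and with $u=t^4$ together with the reflection formula $\Gamma(1/4)\Gamma(3/4)=\pi\sqrt2$ one gets $\int_0^1 dt/\sqrt{1-t^4}=\tfrac14 B(1/4,1/2)=\Gamma(1/4)^2/(4\sqrt{2\pi})$; matching the half-diagonals of the two squares gives the same $c$.

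I do not expect a real obstacle here --- the statement is essentially a computation --- so the only points needing genuine care are the geometric bookkeeping of the first paragraph (that $\vphi(D)$ is an axis-parallel square centered at $0$, so that such a $c$ exists and is a pure scaling with no rotation) and the correct reduction of the elliptic-type integral to a Gamma-function expression.
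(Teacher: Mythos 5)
Your proof is correct and takes essentially the same approach as the paper: identify a linear measurement of the image square, express the resulting integral as a Beta function, and reduce to $\Gamma(1/4)$. The only (harmless) difference is that the paper matches the diagonals via $\int_0^1 dt/\sqrt{1-t^4}=\tfrac14 B(1/2,1/4)$, whereas you match the side midpoints via $\vphi(1)=\tfrac12\int_0^\infty dt/\sqrt{1+t^4}=\tfrac18 B(1/4,1/4)$ (and your appendix cross-check is exactly the paper's computation).
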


\begin{proof}
We apply Remark \ref{rem:1} with $r,k=1$ to see that the diagonal of 
$(-\pi/2,\pi/2)^2$ is equal to 
\begin{align}\label{j14.10}
 \sqrt{2} \pi 
 = 2c \int_0^{1 } \frac 1 {\sqrt{1- t^4}} dt.
\end{align}
Substituting $u=1-t^4$, it follows that
\begin{align}\label{j14.11}
2c\int_0^{1} \frac {1}{\sqrt{1- t^4}}dt
&=\frac{c}{2}\int_0^1 u^{-1/2}(1-u)^{-3/4} du
 =\frac{c}{2} B(1/2,1/4),
 \end{align}
where $B$ is the Beta function. By (\ref{j14.10}) and (\ref{j14.11}), using the elementary identities $B(z,w)=\Gamma(z)\Gamma(w)/\Gamma(z+w)$,
$\Gamma(w)\Gamma(1-w)=\pi/\sin \pi w$, with $z=\frac12$, $w=\frac14$, and $\Gamma(\frac12)=\sqrt{\pi}$
it follows that $c=4\pi^{3/2}/\Gamma(1/4)^2.$ See p. 46 in \cite{Encyc}. The numerical value of $\Gamma(1/4)$ is well-known and can be computed using Gauss' algorithm for arithmetic–geometric mean (AGM).
\end{proof}

Recall that $h$ is the bounded harmonic function in $(0,\pi)^2$ with boundary values $0$ on the left and right sides, and $1$ on the lower and upper sides.

For any function $f(z)$ of complex variable $z=x+iy$, we will denote partial derivatives by $f_x,f_y, f_{xy}$, etc.

\begin{proposition}\label{j30.1}
For a fixed $0<x<\pi/2$, the function
\begin{align*}
y \to \frac{h_x(x+iy)}{h(x+iy)} 
\end{align*}
is increasing for $0<y<\pi/2$.
\end{proposition}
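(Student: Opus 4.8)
The plan is to transplant the square $(-\pi/2,\pi/2)^2$ to the unit disk $D$ via the conformal map $c\vphi$ of Lemma \ref{j14.4} (composed with the translation $(x,y)\mapsto(x-\pi/2,y-\pi/2)$ that centers the square at the origin), so that $h$ pulled back to $D$ becomes the harmonic function $u$ on $D$ with boundary value $1$ on the two arcs of $\partial D$ between $e^{i\pi/4},e^{i3\pi/4}$ and between $e^{i5\pi/4},e^{i7\pi/4}$, and boundary value $0$ on the other two arcs. By Remark \ref{rem:1}, the vertical segment $\{x\}\times(-\pi/2,\pi/2)$ does \emph{not} map to a circular arc, so rather than work on $D$ directly I would follow the route indicated in the section preamble: map the southwest quadrant of $D$ (equivalently, a quarter of the square) by a further conformal map onto a subset of the right half-plane, where the level structure of $u$ and the relevant partial derivatives simplify. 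On the right half-plane the function playing the role of $h$ has an explicit harmonic-measure formula in terms of $\arg$, and the monotonicity we want translates into monotonicity of an explicit expression along the image of the segment $\{x\}\times(0,\pi/2)$.

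The key steps, in order, are: (i) write $h(x+iy)=u(F(x+iy))$ where $F$ is the composition of the translation and $(c\vphi)^{-1}$, so by the Cauchy--Riemann equations and the chain rule $h_x+ih_y = u'(F)\,F'$ in the sense that $h_x = \Re\big(\overline{F'}\,(u_x-iu_y)\big|_{F}\big)$; more usefully, since $u$ is harmonic it is the real part of an analytic $g$, $h=\Re(g\circ F)$, and then $h_x-ih_y = (g'\circ F)\cdot F'$, so $h_x = \Re\big((g'\circ F)F'\big)$ and $h_x/h = \Re\big((g'\circ F)F'\big)/\Re(g\circ F)$; (ii) compose with the map $\psi$ from the southwest quadrant of $D$ onto a half-strip or quadrant in the right half-plane (chosen so the two ``$1$-arcs'' and two ``$0$-arcs'' go to the four sides), where $g$ becomes, up to an affine change, $z\mapsto \tfrac{2}{\pi}\log z$ or $z\mapsto\tfrac{1}{\pi}\arg$-type potential with a closed form; (iii) parametrize the image of $\{x\}\times(0,\pi/2)$ under the full composition as an explicit curve $\sigma\mapsto\zeta(\sigma)$ and compute $\frac{d}{d\sigma}\big(h_x/h\big)$ along it; (iv) show this derivative has a fixed sign by reducing to positivity of an explicit elementary (trigonometric/hyperbolic or rational) function, exactly the style of estimate already carried out for the lemma on $c_t$ in Section \ref{midpoint}, possibly with {\sf Mathematica} assistance for the algebra.

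The main obstacle I expect is step (iii)--(iv): even after transplanting to a half-plane, $h_x/h$ is not a function of the transplanted variable alone because $h_x$ involves the factor $F'$, which is $\sqrt{1+w^4}$-type and genuinely two-dimensional; one must track how $\arg F'$ and $|F'|$ vary along the vertical segment's image and combine them with the variation of $\Re g$ and $\Im g$. Concretely, writing $h_x/h = \Re\big(e^{i\theta(z)}\,\rho(z)\,(g'\circ F)\big)/\Re(g\circ F)$ with $e^{i\theta}\rho=F'$, the derivative along the curve is a sum of several terms of mixed sign, and the work is to show the ``inward-drift-increasing'' terms dominate. I would handle this by choosing the quadrant-to-half-plane map so that along $\{x\}\times(0,\pi/2)$ one coordinate in the target is \emph{monotone} (this is arranged by Remark \ref{rem:1}-type ray preservation at the corners plus the reflection symmetry of the configuration across the horizontal midline), reducing the two-variable derivative to a one-variable sign check; verifying that this reduction is legitimate, i.e. that the relevant curve is a graph over the monotone coordinate, is the delicate geometric point. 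Symmetry across $y=\pi/2$ (which fixes the sign of $h_y$ on that line and makes $h_x/h$ even in $y-\pi/2$ only if $x=\pi/2$, but in general gives monotonicity statements on each half) is used throughout to pin down signs at the endpoints $y=0$ and $y=\pi/2$ of the range.
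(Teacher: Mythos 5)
Your setup---transplanting $h$ to the disk, writing $h=\Re(g\circ F)$ with $g$ an explicit $\log/\arg$-type potential, and using $h_x-ih_y=(g'\circ F)F'$---matches the paper's. But the proposal stalls exactly where you flag the difficulty, and the idea that resolves it is missing. The paper does \emph{not} parametrize the image of the vertical segment $\{x\}\times(0,\pi/2)$ or arrange for a monotone coordinate along that curve; that route faces precisely the mixed-sign obstruction you describe, and your proposed fix (showing the image curve is a graph over a monotone target coordinate) is neither carried out nor obviously true. Instead, the paper applies the quotient rule pointwise: $\partial_y(h_x/h)=(h\,h_{xy}-h_xh_y)/h^2$, so one only needs the sign of $h\,h_{xy}-h_xh_y$ at each point of the southwest quadrant, with no reference to how the segment maps.

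The decisive computation is then: (a) squaring $h_x-ih_y=-\tfrac{4}{c\pi}z(1+z^4)^{-1/2}$ gives $-2h_xh_y$ as an explicit imaginary part, and differentiating once more gives $h_{xy}$ as another explicit imaginary part; (b) in the coordinates $z^2=\rho$, $w=(i+\rho)/(i-\rho)=te^{i\alpha}$ (with $t>1$, $|\alpha|<\pi/2$ on the relevant quadrant) one has the clean values $h=\alpha/\pi+1/2$, $h_{xy}\propto(t-\tfrac1t)\sin\alpha$, and $-2h_xh_y\propto(t-\tfrac1t)\cos\alpha$; (c) the common positive factor $(t-\tfrac1t)$ cancels from the sign question, leaving only the one-variable inequality $(\alpha+\tfrac\pi2)\sin\alpha+\cos\alpha\ge 0$ on $[-\pi/2,\pi/2]$, verified by noting the function vanishes at $\alpha=-\pi/2$ and has derivative $(\alpha+\tfrac\pi2)\cos\alpha\ge 0$. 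Without step (a)--(c) --- in particular without recognizing that $h$, $h_{xy}$, and $h_xh_y$ all become elementary in the $(t,\alpha)$ variables and that the combination factors --- the proposal does not close, so as written it has a genuine gap rather than being an alternative proof.
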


\begin{proof}

Let $u$ be bounded and harmonic in $D$ with boundary values
\begin{align*}
u(e^{i\theta})=
\begin{cases}
0& \text{for  } -\pi/4 < \theta< \pi/4\text{  or  } 3\pi/4 < \theta< 5\pi/4,\\
1& \text{for  } \pi/4 < \theta< 3\pi/4\text{  or  } 5\pi/4 < \theta< 7\pi/4.
\end{cases}
\end{align*}

We find a formula for $u$ as follows. The function $z\to z^2$ maps $D$ onto itself and maps the boundary values of $u$ to the following boundary values: $0$ for $z\in \prt D$ with $\Re z >0$, and   $1$ for $z\in \prt D$ with $\Re z <0$. Next, we use the mapping $z \to (i+z)/(i-z)$ to map $D$ onto the right halfplane. Indeed for $z=x+iy,\ x,y\in\R,\ x^2+y^2=1$, we have 
$(i+z)/(i-z)=-i x/(y-1)$. The right half of the circle $\prt D$ is mapped onto the lower half of the vertical axis and the left half of $\prt D$ is mapped onto the upper half of the vertical axis. Hence, the boundary values are mapped onto $0$ on the lower half of the vertical axis and $1$ on the upper part of the vertical axis. A bounded harmonic function in the right halfplane with these boundary values is $z\to (1/\pi) \arg z + 1/2  
=\mathrm{Re}(-(i/\pi) \log z+1/2)$.

It follows that, for $z\in D$,
\begin{align}\label{j17.1}
u(z) = \Re\left(-\frac i \pi \log\left(\frac {i+z^2}{i-z^2}\right) + 1/2\right).
\end{align}
For $z\in D$, let
\begin{align*}
f(z) = -\frac i \pi \log\left(\frac {i+z^2}{i-z^2}\right) + 1/2. 
\end{align*}
Then
\begin{align}\label{j12.3}
f'(z) 
= -\frac i \pi \left( \frac {2z}{i+z^2} + \frac {2z}{i-z^2}\right)
= - \frac 4 \pi \frac z {1+z^4}.
\end{align}
By the Cauchy-Riemann equations
$f'(z) = f_x(z)=u_x(z) -i u_y(z)$. 

Next we map the square $[0,\pi]^2\subset \R^2=\C$ onto $D$ as follows. The mapping $z\to z- \pi(1+i)/2$ maps the square $[0,\pi]^2$ onto the square with vertices $\pi(1+i)/2, \pi(-1+i)/2, -\pi(1+i)/2, \pi(1-i)/2 $. Hence,
\begin{align}\label{j12.1}
\psi(z) = \vphi^{-1} \left( \frac {z- \pi(1+i)/2}{c}\right)
\end{align}
maps $[0,\pi]^2$ onto $D$, where $c>0$ is defined in Lemma \ref{j14.4}.

Note that $h = u \circ \psi$ because $ u \circ \psi$ is a bounded harmonic function in $[0,\pi]^2$  with boundary values $0$ on the left and right sides, and $1$ on the lower and upper sides.

If $w=\psi(z)$ then, using \eqref{j12.1}, 
\begin{align}\label{j12.7}
\psi^{-1}(w) &= c \vphi(w) + \pi(1+i)/2,\\
(\psi^{-1})'(w)&= c \vphi'(w)=\frac c {\sqrt{1+w^4}}.\label{j12.30}
\end{align}

By the Cauchy-Riemann equations, $(f\circ \psi)'(z) = ((f'\circ \psi)(z)) \psi'(z) = h_x(z) - i h_y(z) $, for  $z\in [0,\pi]^2$. Thus by \eqref{j12.3}, for $z\in D$,
\begin{align}\label{j12.31}
(h_x\circ\psi^{-1})(z) - i (h_y\circ\psi^{-1})(z)
&= \frac
{- \frac 4 \pi \frac z {1+z^4}}
{c /\sqrt{1+z^4}}
= -\frac {4}{c\pi}
\frac z
{\sqrt{1+z^4}}.\ 
\end{align}
We square both sides and compute the imaginary part,
\begin{align}
-2(h_x\circ\psi^{-1})(z)  (h_y\circ\psi^{-1})(z)
&= \frac {16}{c^2\pi^2} \Im \left( \frac {z^2}
{1+z^4}\right) 
= \frac {16}{c^2\pi^2} \Im \left( \frac {1}
{z^{-2}+z^2}\right).\label{j12.35}
\end{align}
Taking another derivative of \eqref{j12.31} and using \eqref{j12.30},
\begin{align}\notag
((h_{xx}\circ\psi^{-1})(z) - i (h_{yx}\circ\psi^{-1})(z) )(\psi^{-1})'(z)
&= -\frac {4}{c\pi}
\frac {1-z^4}
{(1+z^4)^{3/2}},\\
(h_{xx}\circ\psi^{-1})(z) - i (h_{yx}\circ\psi^{-1})(z) 
&= -\frac {4}{c^2\pi}
\frac {1-z^4}
{1+z^4}, \label{j12.52} \\
 (h_{yx}\circ\psi^{-1})(z) 
&= \frac {4}{c^2\pi} \Im \left(\frac {1-z^4}{1+z^4}\right)
= \frac {4}{c^2\pi} \Im \left(\frac {z^{-2}-z^2}{z^{-2}+z^2}\right).\label{j12.36}
\end{align}

Let $F(z) = h_x(z)/h(z)$ where $z=x+iy$. Then
\begin{align}\label{j12.40}
\frac{\prt F}{\prt y}
= \frac{h h_{xy} -h_xh_y}{h^2}.
\end{align}
We would like to determine the sign of $F_y$ so it will
suffice to analyze the sign of $h h_{xy} -h_xh_y$.

We will change variables as follows,
\begin{align}\label{j13.1}
z= \varphi^{-1}(x+iy), \qquad
z^2 = \rho, \qquad w = \frac{i+\rho}{i-\rho}, \qquad w=t e^{i\alpha},
\end{align}
where $x+iy\in [0,\pi]^2$ and $z,\rho\in D$.

We recall from Remark \ref{rem:1} that for $0\leq r \leq 1$ and integer $k$,
\begin{align*}
 \int_0^{r e^{ik\pi/4}} \frac 1 {\sqrt{1+w^4}} dw
 = e^{ik\pi/4} \int_0^{r } \frac 1 {\sqrt{1\pm t^4}} dt,
\end{align*}
where $+$ corresponds to even $k$ and $-$ to odd $k$. It follows that rays in $D$
emanating from 0 at angles that are multiples of $\pi/4$ are mapped by $\varphi$
onto rays in the square with the same slopes. Hence, the regions between rays
in $D$
are mapped onto the regions between the corresponding rays in the square.
We are concerned with the south-west part of the square. It corresponds
to $z$-values in the south-west part of $D$. As $z\mapsto z^2$ maps this latter region onto the upper half of D, the corresponding $\rho$-values are in the upper half of $D$. 

The function $\rho \mapsto \frac{i+\rho}{i-\rho}$ maps the upper half of $D$  
onto $\{w\in\C\mid \mathrm{Re}(w)>0, |w|>1\}$, so we must have
$t>1$ and $-\pi/2 < \alpha < \pi/2$ in \eqref{j13.1}.
Using (\ref{j13.1}), we can write $z=z(t,\alpha)$, for $z\in D$. Then 

\begin{align}\label{j12.33}
h(z(t,\alpha)) = \frac \alpha \pi +\frac 1 2.
\end{align}

We have
\begin{align*}
&z^2 = \rho = i \left(\frac{w-1}{w+1}\right),\\
&z^2 + z^{-2} = i \left(\frac{w-1}{w+1}\right)- i \left(\frac{w+1}{w-1}\right)
= i \left(
\frac{(w-1)^2 - (w+1)^2}{w^2-1}
\right) = -\frac{4iw}{w^2-1},\\
&z^2 - z^{-2} = i \left(
\frac{(w-1)^2 + (w+1)^2}{w^2-1}
\right)
=2i \left(
\frac{ w^2+1}{w^2-1} \right),\\
& \Im \left(\frac {z^{-2}-z^2}{z^{-2}+z^2}\right)
=  \Im \left(\frac {-2i \left(
\frac{ w^2+1}{w^2-1}\right)}{ -\frac{4iw}{w^2-1}}\right)
=  \Im \left(\frac 1 2 (w + 1/w)\right)
=  \Im \left(\frac 1 2 \left(t e^{i\alpha} + \frac 1 t e^{-i\alpha}\right)\right)\\
&\qquad =\frac 1 2 \left(t - \frac 1 t\right) \sin \alpha,\\
& \Im \left( \frac {1}
{z^{-2}+z^2}\right) 
= \Im \left( \frac {i (w^2-1)}
{4 w}\right)
= \Re \left(\frac 1 4 \left ( w - \frac 1 w \right)   \right)
= \Re \left(\frac 1 4 \left ( t e^{i\alpha} - \frac 1 t e^{-i\alpha} \right)   \right)\\
&\qquad = \frac 1 4 \left(t - \frac 1 t\right) \cos \alpha.
\end{align*}
It follows immediately from these equations and \eqref{j12.33}, \eqref{j12.36} and \eqref{j12.35} that

\begin{align*}
(( h h_{xy} -h_xh_y)\circ\psi^{-1})(z(t,\alpha))
&= \left(\frac \alpha \pi +\frac 1 2\right) \frac{4}{c^2\pi}\cdot
\frac 1 2 \left(t - \frac 1 t\right) \sin \alpha
+ \frac{8}{c^2 \pi^2}\cdot \frac 1 4 \left(t - \frac 1 t\right) \cos \alpha\\
& = \frac{2}{c^2\pi^2}
\left(\left(\alpha + \frac \pi 2\right)  \sin \alpha +  \cos \alpha\right) 
 \left(t - \frac 1 t\right).
\end{align*}
Recall that $t>1$. Hence, $t - \frac 1 t >0$.
If $g(\alpha) = \left(\alpha + \frac \pi 2\right)  \sin \alpha +  \cos \alpha$ then
 $g(-\pi/2) = 0$ and $g'(\alpha ) = \left(\alpha + \frac \pi 2\right)  \cos \alpha>0$
for $-\pi/2 < \alpha< \pi/2$. Therefore, $g(\alpha) \geq 0$
for $-\pi/2\leq \alpha \leq \pi/2$.
We conclude that $(( h h_{xy} -h_xh_y)\circ\psi^{-1})(x+iy)\geq 0$ for
$(x,y)\in[0,\pi/2]^2$
and this implies, in view of \eqref{j12.40}, that $\frac{\prt F}{\prt y}
(x+iy)= \frac{\prt (h_x/h)}{\prt y}(x+iy)  \geq 0$.
\end{proof}

\begin{proposition}\label{j30.2}
For $z=x+i\pi/2$, $0<x<\pi/2$,
\begin{align*}
\frac{h_x(z)}{h(z)} < \cot(\Re z).
\end{align*}
\end{proposition}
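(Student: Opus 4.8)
I would set $g(x):=h(x+i\pi/2)=h(x,\pi/2)$. Since $h>0$ in the open square and $\sin x>0$ on $(0,\pi/2)$, the claimed inequality $h_x(x+i\pi/2)/h(x+i\pi/2)<\cot x$ is equivalent to
$Q(x):=g'(x)\sin x-g(x)\cos x<0$ for $0<x<\pi/2$. I record the two endpoint values: $Q(0)=0$ because $g(0)=h(0,\pi/2)=0$, and $Q(\pi/2)=g'(\pi/2)=h_x(\pi/2,\pi/2)=0$ because the reflection $x\mapsto\pi-x$ preserves the boundary value problem \eqref{eq: BVP_h}, so $h(x,y)=h(\pi-x,y)$ and $h_x(\pi/2,y)\equiv0$. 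A direct differentiation gives the useful identity $Q'(x)=\sin x\,\bigl(g''(x)+g(x)\bigr)$, which moves the whole problem onto the sign of $g''+g$.

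\textbf{Fourier input.} Evaluating the series for $h$ from Section~\ref{midpoint} on the symmetry line $y=\pi/2$ and using $F_n(\pi/2)-\coth(n\pi)G_n(\pi/2)=\operatorname{sech}(n\pi/2)$ (a two--line hyperbolic identity), one gets $g(x)=\tfrac4\pi\sum_{k\ge0}\tfrac{\operatorname{sech}((2k+1)\pi/2)}{2k+1}\sin((2k+1)x)$. The coefficients decay like $e^{-k\pi}$, so differentiating twice term by term is legitimate and yields $g''(x)+g(x)=-\tfrac4\pi\sum_{k\ge1}b_k\sin((2k+1)x)$, where $b_k:=\tfrac{(2k+1)^2-1}{2k+1}\operatorname{sech}((2k+1)\pi/2)=\tfrac{4k(k+1)}{2k+1}\operatorname{sech}((2k+1)\pi/2)>0$ and, crucially, the $k=0$ term cancels. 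Thus on $(0,\pi/2)$ the sign of $Q'$ is opposite to that of $\Psi(x):=\sum_{k\ge1}b_k\sin((2k+1)x)$.

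\textbf{Sign of $\Psi$ --- the crux.} I would write $\Psi(x)=\Im\bigl(e^{3ix}P(e^{2ix})\bigr)$ with $P(\zeta):=\sum_{k\ge1}b_k\zeta^{k-1}$, analytic on $D$ and continuous on $\overline{D}$. The point is that $\operatorname{sech}((2k+1)\pi/2)$ decays geometrically, giving the elementary estimate $b_{k+1}/b_k<1/4$ for all $k\ge1$ (the polynomial factor $\tfrac{(k+2)(2k+1)}{k(2k+3)}$ is at most $9/5$, while $\cosh((2k+1)\pi/2)/\cosh((2k+3)\pi/2)<e^{-\pi}(1+e^{-3\pi})<\tfrac1{20}$). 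Hence $\sum_{k\ge2}b_k<\tfrac13 b_1$, so $P$ maps $\overline{D}$ into the disc $\{|\zeta-b_1|\le\tfrac13 b_1\}$, which lies in $\{\Re\zeta>0\}$ and avoids $0$. Therefore $\delta(x):=\arg P(e^{2ix})$ is a well-defined smooth function with $|\delta|<\pi/2$, $\delta(0)=\delta(\pi/2)=0$ (as $P(1)$ and $P(-1)$ are positive reals), and the same coefficient bounds give $|\delta'(x)|\le 2|P'(e^{2ix})|/|P(e^{2ix})|<1$. Consequently $\phi(x):=3x+\delta(x)$ is a strictly increasing homeomorphism of $[0,\pi/2]$ onto $[0,3\pi/2]$, so $\Psi(x)=|P(e^{2ix})|\sin\phi(x)$ is $>0$ on $(0,x^*)$ and $<0$ on $(x^*,\pi/2)$, where $x^*:=\phi^{-1}(\pi)\in(0,\pi/2)$.

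\textbf{Conclusion and main obstacle.} From the previous step $Q'<0$ on $(0,x^*)$ and $Q'>0$ on $(x^*,\pi/2)$; combined with $Q(0)=Q(\pi/2)=0$ this forces $Q$ to be strictly decreasing then strictly increasing, hence $Q<0$ throughout $(0,\pi/2)$, which gives the proposition after dividing by $\sin x\cdot h>0$. The only genuinely delicate step is the sign analysis of $\Psi$, i.e.\ that $g''+g$ changes sign exactly once on $(0,\pi/2)$; its content is that the series for $g''+g$ is overwhelmingly dominated by its first harmonic $b_1\sin 3x$, the tail being smaller by a factor $\sim e^{-\pi}$, and the $\arg$ device is simply a clean way to turn ``dominated by the first term'' into the monotonicity of $\phi$. (One could instead use $\sin((2k+1)x)/\sin x=U_{2k}(\cos x)$ and show the polynomial $\sum_{k\ge1}b_kU_{2k}(c)$ has a single zero in $(0,1)$, being negative near $0$ and positive near $1$, but the complex-analytic phrasing is shorter.)
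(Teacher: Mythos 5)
Your proof is correct, and it reaches the conclusion by a genuinely different route than the paper. Both arguments share the same skeleton: your $Q$ is exactly the paper's auxiliary function $K(x)=h_x(x+i\pi/2)\sin x-h(x+i\pi/2)\cos x$, both establish $K(0)=K(\pi/2)=0$, and both reduce everything to showing that $K'=\sin x\,(h_{xx}+h)$ changes sign exactly once on $(0,\pi/2)$, from negative to positive. Where you diverge is in how that single sign change is proved. The paper stays inside the conformal-map framework of Section \ref{conformal}: the segment $y=\pi/2$, $0\le x\le\pi/2$ corresponds under $\psi$ to the real segment $[-1,0]$ of the disk, where \eqref{j12.31} and \eqref{j12.52} give closed forms for $h_x$, $h_{xx}$, $h_{xxx}$; the sign analysis then comes down to locating the unique root in $(-1,0)$ of the explicit quartic $t^4-(8/c^2)t^2+1$ with $c=4\pi^{3/2}\Gamma(1/4)^{-2}$, combined with the endpoint values $(h_{xx}+h)\circ\psi^{-1}(-1)=0$ and $(h_{xx}+h)\circ\psi^{-1}(0)=-4/(c^2\pi)+1/2>0$. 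You instead exploit the Fourier series of Section \ref{midpoint}, which on the symmetry line collapses (via $F_n(\pi/2)-\coth(n\pi)G_n(\pi/2)=\operatorname{sech}(n\pi/2)$, which I checked) to coefficients $\operatorname{sech}((2k+1)\pi/2)/(2k+1)$; the exact cancellation of the $k=0$ mode in $g''+g$ and the geometric decay of the tail let you show the sign change by a winding argument. Your route is self-contained relative to Section \ref{midpoint} and does not need the Schwarz--Christoffel apparatus at all, at the cost of explicit tail estimates; the paper's route reuses machinery it needs anyway for Proposition \ref{j30.1} and yields an exact algebraic criterion. One small remark: your stated ratio bound $b_{k+1}/b_k<1/4$ alone would only give $|\delta'|\le 4/3$, not $<1$; but the bound you actually derive is $9/100$, which does give $|\delta'|<1$, and in any case all your argument needs is $\phi'(x)=3+\delta'(x)>0$, i.e.\ $\delta'>-3$, so even the crude bound suffices. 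This is a presentational nit, not a gap.
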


\begin{proof}
Let $K(x) = h_x(x+i\pi/2) \sin x - h(x+i\pi/2) \cos x$ for $0\leq x \leq \pi/2$.
It suffices to show that $K$ is negative on this interval, by Proposition \ref{j30.1}.

We will analyze the sign of $K_x(x) = (h_{xx} +h)(x)\sin x$. Since $\sin x>0$
for $0< x \leq \pi/2$, it will suffice to analyze the sign of $h_{xx}+h$.

Note that $h_x(\pi/2+i\pi/2)=0$ by symmetry. By assumption, $h(i\pi/2)=0$. Hence,
\begin{align}\label{j12.50}
K(0)&=   h_x(i\pi/2) \cdot 0 - 0\cdot 1=0,\\
K(\pi/2)& = h_x(\pi/2+i\pi/2) \cdot 1 - \frac{1}{2} \cdot 0 = 0.\label{j12.51}
\end{align}

By \eqref{j12.52} applied to $z=t+i\cdot 0$, 
\begin{align}\label{j12.60}
(h_{xx}\circ\psi^{-1})(z) 
&= -\frac {4}{c^2\pi}
\frac {1-t^4}{1+t^4},
\end{align}
because the right side of (\ref{j12.60}) is real.

Differentiating \eqref{j12.52} and applying \eqref{j12.30} yields
\begin{align*}
((h_{xxx}\circ\psi^{-1})(z) - i (h_{yxx}\circ\psi^{-1})(z) )(\psi^{-1})'(z)
&= \frac {4}{c^2\pi}
\frac {8z^3}
{(1+z^4)^2},  \\
 (h_{xxx}\circ\psi^{-1})(z) - i (h_{yxx}\circ\psi^{-1})(z) 
&= \frac {32}{c^3\pi}
\frac {z^3}
{(1+z^4)^{3/2}},  \\
 (h_{xxx}\circ\psi^{-1})(t) 
&= \frac {32}{c^3\pi}
\frac {t^3}
{(1+t^4)^{3/2}}.
\end{align*}
The last line follows because the right side is real.

By \eqref{j12.31},
\begin{align*}
(h_x\circ\psi^{-1})(t) 
&=  -\frac {4}{c\pi}
\frac t {\sqrt{1+t^4}},
\end{align*}
so
\begin{align*}
(h_{xxx}\circ\psi^{-1})(t)  +
(h_x\circ\psi^{-1})(t) 
&=\frac {32}{c^3\pi}
\frac {t^3}
{(1+t^4)^{3/2}}  -\frac {4}{c\pi}
\frac t {\sqrt{1+t^4}}\\
&= \frac{4}{c\pi} \frac{(-t)}{(1+t^4)^{3/2}} \left(t^4-\frac 8 {c^2} t^2 + 1 \right).
\end{align*}
We want to determine the sign of the last expression so it will
suffice to analyze $p(t):=t^4-( 8/ {c^2}) t^2 + 1 $. The roots of this polynomial must satisfy
\begin{align*}
t^2 = \frac 1 2 \left(\frac 8 {c^2} \pm \sqrt{64/c^4 -4}   \right)
= \frac 4 {c^2} \pm \sqrt{\left(\frac 4 {c^2}\right)^2 -1}.
\end{align*}
Since $c< 2 $  (see Lemma 
 \ref{j14.4}),  these values of $t^2$ are positive and distinct.  Moreover, their product is $1.$ 
 Hence, in $(-1,0)$, the function $p$ has a single root $t_0$. 

Note that
\begin{align*}
(h_{xxx}\circ\psi^{-1})(0)  +
(h_x\circ\psi^{-1})(0) 
&= 0,\\
 (h_{xxx}\circ\psi^{-1})(-1)  +
(h_x\circ\psi^{-1})(-1) 
&= \frac{4}{c\pi} \frac{-1}{(1+1)^{3/2}} \left(\frac 8 {c^2}  - 1 - 1\right)
<0.
\end{align*}
The last two equations along with the fact that $p$ has exactly one root at $t_0$ on $(-1,0)$ imply that This implies that for some $0< x_0 < \pi/2$,
\begin{align*}
&((h_{xx}  +h)_x\circ\psi^{-1})(t) 
\begin{cases}
<0 & \text{  for  } -1<t<t_0,\\
>0 & \text{  for  } t_0<t<0,
\end{cases}\\
&(h_{xx}  +h)_x(x) 
\begin{cases}
<0 & \text{  for  } 0<x<x_0,\\
>0 & \text{  for  } x_0<x<\pi/2.
\end{cases}
\end{align*}
By \eqref{eq: BVP_h} and \eqref{j12.60},
\begin{align*}
((h_{xx}+h)\circ\psi^{-1})(-1) 
&= -\frac {4}{c^2\pi}
\frac {1-1}{1+1}+0=0,\\
((h_{xx}+h)\circ\psi^{-1})(0) 
&= -\frac {4}{c^2\pi}+ \frac 1 2 >0.
\end{align*}
These observations imply that the function $(h_{xx}+h)(x)$ 
decreases from 0 to a minimum at $x_0$ and then increases to a positive
maximum at $\pi/2$. Thus for some $0< x_1< \pi/2$,
$K_x(x)=(h_{xx}+h)(x)\sin x$ is negative for $0<x< x_1$ and positive
for $x_1< x< \pi/2$. In view of \eqref{j12.50}-\eqref{j12.51}, $K$
is negative on the interval $[0,\pi/2]$.
\end{proof}

\begin{proof}[Proof of Theorem \ref{j19.7}]
The theorem follows from  Propositions \ref{j30.1} and \ref{j30.2}.
\end{proof}
\
\section{Estimates based on the stationary distribution}\label{sec:stat}

We will show that for sufficiently small $\eps>0$, for some  $c_{\eps}<C_{\eps}$, the long-term occupation measure of $(0, \eps)$ for Brownian motion conditioned to stay inside $(0,\pi)$ is smaller than $c_\eps$ while the long-term occupation measure of $(0, \eps)$  for the spine is larger than $C_\eps$. 

We start with an elementary result based on well known estimates of harmonic measure and complex analytic methods.

Recall that the harmonic measure is the exit distribution of a Brownian motion, with a given starting (base) point $z$.
More precisely, the harmonic measure of a subset of the boundary of a bounded domain $D$ in  $\R^{2}$ is the probability that a Brownian motion started  at $z\in D$ hits that subset of the boundary.
For a domain $D$, $A\subset \prt D$ and $z\in D$, let $\mu_z(A)$ denote the harmonic measure of $A$ in $D$ with the base point $z$. 

\begin{lemma}\label{j4.1}
(i) If $D= \{v\in\C: |\Im v - a| < r, \Re v < b\}$, $A=\{v\in \prt D: \Re v = b\}$ and $z=c+ ai$ where $c< b$, then
\begin{align*}
\mu_z(A)= \frac 4 \pi \arctan\left( \exp\left(-\frac{\pi (b-c)}{2r}\right)\right).
\end{align*}

(ii) If $D = \{v\in\C: \Re v >0, \Im v > 0, |v| < \pi\}$, $A= \{v\in \prt D: |v| = \pi\}$ and $z=x+yi\in D$, then
\begin{align*}
\mu_z(A) \leq \frac 4 \pi \arctan((x^2+y^2)/\pi^2).
\end{align*}

\end{lemma}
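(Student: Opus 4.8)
For part (i), the plan is to use an explicit conformal map from the strip-with-one-end $D = \{v : |\Im v - a| < r,\ \Re v < b\}$ onto a standard half-disk or half-strip, and then read off the harmonic measure of the ``far'' edge $A = \{\Re v = b\}$ from the known formula on the image domain. First I would normalize: translate by $-ai$ and rescale by $\pi/(2r)$ so that $D$ becomes the half-infinite strip $\{|\Im v| < \pi/2,\ \Re v < \beta\}$ where $\beta = \pi(b-c)/(2r)$ after also translating so that the base point $z$ sits at the origin (i.e. choose coordinates so $c+ai \mapsto 0$, hence $b \mapsto \beta$). The map $v \mapsto e^{v}$ sends this half-strip conformally onto the half-disk $\{|w| < e^{\beta},\ \Re w > 0\}$ (when the strip is $\{|\Im v| < \pi/2\}$), with the edge $\{\Re v = \beta\}$ going to the circular arc $\{|w| = e^{\beta}\}$ and the two long sides going to the segments of the imaginary axis, and the base point going to $w = 1$. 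Harmonic measure of the circular arc of a half-disk of radius $R$ seen from an interior point is classical; from the point $w=1$ in the half-disk of radius $R = e^{\beta}$ one more Möbius/power map (e.g. $w \mapsto w/R$ then $w \mapsto$ the disk via a reflection doubling argument) reduces it to $\tfrac4\pi \arctan$ of a power of $e^{-\beta}$. Tracking the constants carefully gives exactly $\tfrac4\pi\arctan(e^{-\pi(b-c)/(2r)})$. The main obstacle here is purely bookkeeping: getting the normalization of the strip width and the location of the base point consistent so that the final argument of $\arctan$ comes out as $\exp(-\pi(b-c)/(2r))$ and not some other exponent or with a stray factor; I would double-check by testing the limiting cases $c \to b^-$ (answer $\to \tfrac4\pi\cdot\tfrac\pi4 = 1$) and $c \to -\infty$ (answer $\to 0$), which the formula respects.

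For part (ii), the plan is to dominate the quarter-disk sector $D = \{\Re v > 0,\ \Im v > 0,\ |v| < \pi\}$ by a domain to which part (i) applies, using monotonicity of harmonic measure under domain inclusion together with a conformal change of variables that straightens the two straight edges of the sector into the two parallel sides of a strip. The natural map is $v \mapsto \log v$ (a branch defined on the quarter disk): it sends $D$ onto the half-strip $\{0 < \Im w < \pi/2,\ \Re w < \log\pi\}$, sends the circular arc $A = \{|v| = \pi\}$ onto the vertical edge $\{\Re w = \log\pi\}$, and sends the base point $z = x+iy$ to $w_0 = \log|z| + i\arg z$, with $\Re w_0 = \tfrac12\log(x^2+y^2)$. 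Now I apply part (i) to this half-strip (width $r = \pi/4$ after recentering the strip of width $\pi/2$, with the base point's imaginary coordinate irrelevant to the bound since part (i) is stated for $z$ on the center line — here I would either invoke the obvious fact that harmonic measure of the far edge is maximized when the base point is on the center line, or directly note $\arctan$ is increasing and bound $\Re w_0$): the harmonic measure of $\{\Re w = \log\pi\}$ from a point with real part $\xi = \tfrac12\log(x^2+y^2)$ is $\tfrac4\pi\arctan\!\big(\exp(-\tfrac{\pi}{2r}(\log\pi - \xi))\big)$ with $2r = \pi/2$, i.e. $\tfrac4\pi\arctan\!\big(\exp(-2(\log\pi - \tfrac12\log(x^2+y^2)))\big) = \tfrac4\pi\arctan\!\big((x^2+y^2)/\pi^2\big)$ once we also use that the extra boundary of the enlarging strip only increases the harmonic measure (since the strip is wider/extends further than the conformal image of $D$ only on irrelevant sides) — actually the conformal image of $D$ *is* exactly this half-strip, so no enlargement is needed for the arc itself; the inequality ``$\le$'' in the statement comes solely from replacing the true imaginary coordinate of $w_0$ by the center line, i.e. from the monotonicity of harmonic measure of the far edge in the transverse position of the base point.

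The genuinely delicate point in (ii) is justifying the passage $\tfrac4\pi\arctan(\cdot) \le \tfrac4\pi\arctan((x^2+y^2)/\pi^2)$ cleanly: one must argue that among all base points with a given real part in the half-strip, the harmonic measure of the far vertical edge is largest on the horizontal center line. This follows from reflecting the half-strip across its center line and using the maximum principle (the harmonic measure, as a function of the transverse variable, is concave on each horizontal slice, hence maximized at the center), or alternatively from an explicit Fourier-series expression for harmonic measure in a half-strip where the dominant mode $\cos$ of the transverse variable is nonnegative and maximal at the center. I would present the reflection/maximum-principle argument as it is the shortest. Everything else in (ii) is the routine identity $\exp(-2\log(\pi/\sqrt{x^2+y^2})) = (x^2+y^2)/\pi^2$.
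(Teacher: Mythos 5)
Your approach is essentially the paper's: part (i) is the classical half-strip harmonic-measure formula (the paper simply cites it from Garnett--Marshall, line after (5.3), p.~144, and obtains the stated version by scaling; your conformal derivation via $e^{w}$ and the half-disk is a correct self-contained substitute, and your normalization checks out), and part (ii) is exactly the $\log$-map to the half-strip followed by the observation that the base point can be moved to the center line at the cost of an inequality.

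The one place where your sketch is not yet a proof is the justification of that last step. Reflection across the center line plus the maximum principle gives only the \emph{symmetry} $u(x,y)=u(x,\pi/2-y)$ of the harmonic measure $u$ of the far edge; it does not give concavity in the transverse variable, and your alternative Fourier argument also fails as stated, since the expansion $u=\tfrac4\pi\sum_{k\ \mathrm{odd}}k^{-1}e^{k(x-\log\pi)}\sin(2ky)$ has higher modes that are \emph{not} maximized at the center. Two clean repairs: (a) use the explicit formula for harmonic measure of the arc of a half-disk, $\tfrac2\pi\arctan\bigl(2\rho\sin\theta/(1-\rho^2)\bigr)$ at $\zeta=\rho e^{i\theta}$, which is visibly maximal at the midpoint of each arc $|\zeta|=\rho$ --- this is what the paper does, quoting (5.3) of Garnett--Marshall; or (b) apply the maximum principle not to $u$ but to $u_y$ on the quarter-strip below the center line: $u_y$ is harmonic there, vanishes on the center line (by symmetry) and on the far edge (tangential derivative of a constant), is nonnegative on the side $\{y=0\}$ (where $u$ attains its minimum $0$), and tends to $0$ at infinity, whence $u_y\ge 0$ below the center line and $u(x,\cdot)$ is indeed maximized at the center. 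With either of these substituted for the concavity claim, your argument is complete.
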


\begin{proof} (i)
Our formula is obtained by scaling from the formula  stated on the line after (5.3) on page 144 in 
\cite{GM}.

(ii)  If $x=y$, we can transform the problem to that in part (i) by using the conformal mapping $z\to \log z$ and then applying conformal invariance of harmonic measure (use $a=\pi/4, b=\log\pi, c=\log(\sqrt{2}x),r=\pi/4$). In the case $x=y$, the formula holds with the equality sign. To extend the formula to the case $x\ne y$, we note that the function $z\to \mu_z(A)$ takes the highest value in the middle of the arc $\{z\in D: |z|^2 =x^2+y^2\}$. The last remark follows from (5.3) on page 144 in  \cite{GM}.
\end{proof}

\begin{proof}[Proof of Theorem \ref{s26.1}] Since the normalized principal Dirichlet eigenfunction of the Laplacian in $(0,\pi)$ is $\phi(x)=\sqrt{2/\pi}\sin(x)$,
the stationary density for Brownian motion $W_t$ conditioned to stay in $(0,\pi)$ forever is $\phi^2(x)$=$(2/\pi) \sin ^2 x $. Hence, by the ergodic theorem, a.s.,
\begin{align}\label{j3.1}
\lim_{t\to\infty} \frac 1 t \int_0^t \bone_{(0,\eps)}(W_s) ds
= \int_0^\eps (2/\pi) \sin ^2 x dx = 
\frac 1 \pi (\eps - (1/2) \sin(2\eps))
= \frac{2 \eps ^3}{3 \pi }-\frac{2 \eps ^5}{15 \pi }+o\left(\eps ^6\right).
\end{align}

Let $G(x,y)$ denote the Green function for Brownian motion in $(0,\pi)$ killed upon exiting the interval.
According to \cite[Thm. 1.5]{MK}, $G(x,y)$, appropriately normalized, is the stationary density for the two-particle Fleming-Viot process in $(0,\pi)$ driven by Brownian motion. 

We have
\begin{align*}
G(x,y) =
\begin{cases}
\frac 2 {\pi^2} y (\pi-x) & \text{ if } 0<y < x< \pi,\\
\frac 2 {\pi^2} x (\pi - y) & \text{ if } 0<x < y< \pi.
\end{cases}
\end{align*}
The normalizing constant $\frac 2 {\pi^2}$  is chosen so that $\int_0^\pi \int_0^\pi G(x,y) dx dy=1$.

Suppose that the process is in the stationary regime.
Consider $x,y\in (0, \pi/2)$. At any time $t\geq 0$, the probability that the spine is in $dx$ and it is $X_t$, and $Y_t\in dy$, is equal to $G(x,y)dxdy$ times the probability $p(x,y)$ that $Y$ will exit $(0,\pi)$ before $X$ does.

Suppose that $y \leq x^{1/2}$.
The probability $p(x,y)$ is bounded below by the probability $p_1$ that
two-dimensional Brownian motion starting from $(x,y)$ will hit the $x$-axis before it hits the $y$-axis minus the probability $p_2$ that $(X_t,Y_t)$ will hit the circle centered at 0 with radius $\pi$ before $X$ or $Y$ exit the first quadrant.
We have $p_1=(2/\pi)\arctan(x/y)$ because this is the unique bounded harmonic function in the first quadrant with boundary values 0 on the vertical axis and 1 on the horizontal axis.
By Lemma \ref{j4.1} (ii),
\begin{align*}
p_2\leq\frac 4 \pi \arctan((x^2+y^2)/\pi^2).
\end{align*}
Hence, in this case,
\begin{align}\label{j4.2}
p(x,y) &\geq p_1-p_2 \geq 
\frac2\pi\arctan(x/y) - \frac 4 \pi \arctan((x^2+y^2)/\pi^2).
\end{align}
For sufficiently small $x>0$, since we are assuming that  $y \leq x^{1/2}$,
\begin{align*}
\frac 1 2 \cdot
\frac2\pi\arctan(x/y) \geq \frac 1 2 \cdot \frac2\pi\arctan(x^{1/2})
\geq \frac 4 \pi \arctan((x^2+x)/\pi^2) \geq
 \frac 4 \pi \arctan((x^2+y^2)/\pi^2).   
\end{align*}
This and \eqref{j4.2} imply that for sufficiently small $x>0$,
\begin{align*}
p(x,y) &\geq  
\frac1\pi\arctan(x/y) .
\end{align*}

Factor 2 in the following formula is to account for both $X$ and $Y$ possibly being the spine.
In the calculation below we will use the  estimates $\arctan(x/y) \geq (\pi/4) x/y$ for $y\geq x$,  and $\pi-y > \pi-1$, for $y\in (0,1)$. 
The two-particle process $\calV(t) $ in \eqref{j18.1} is ergodic by  \cite[Thm. 1.5]{MK}.
It follows from the ergodic theorem 
that for sufficiently small $\eps\in(0,1)$, a.s.,
\begin{align*}
\lim_{t\to\infty}& \frac 1 t \int_0^t \bone_{(0,\eps)}(J_s) ds
=\int_0^\eps
2\int_0^1 G(x,y) p(x,y) d y dx\\
&\geq \int_0^\eps
2\int_x^{x^{1/2}} \frac 2 {\pi^2} x (\pi-y) 
\frac 1 \pi \arctan(x/y)dy dx\\
&\geq c_1 \int_0^\eps
\int_x^{x^{1/2}}  \frac{x^2}{y}dy dx
=\frac{c_1}{6} \eps^3\left(\log\frac{1}{\eps}+\frac{1}{3}\right).
 \end{align*} 
This is larger than the quantity in \eqref{j3.1} for small $\eps>0$. 
It follows that the distribution of the spine and the distribution of Brownian motion conditioned to stay in $(0,\pi)$ forever are mutually singular.
\end{proof}


\providecommand{\bysame}{\leavevmode\hbox to3em{\hrulefill}\thinspace}
\providecommand{\MR}{\relax\ifhmode\unskip\space\fi MR }
\providecommand{\MRhref}[2]{%
  \href{http://www.ams.org/mathscinet-getitem?mr=#1}{#2}
}
\providecommand{\href}[2]{#2}

\end{document}